\def\namedlabel#1#2{\begingroup
   \def\@currentlabel{#2}%
   \label{#1}\endgroup
}
\theoremstyle{plain}
\newtheorem{thm}{Theorem}[section]
\newtheorem{lem}[thm]{Lemma}
\theoremstyle{definition}
\newtheorem{defn}[thm]{Definition}
\theoremstyle{remark}
\setlist[enumerate,1]{leftmargin=2em}
\def\A{\mathcal A}
\def\C{\mathbb C}
\newcommand{\imi}{\mathbf{i}}
\def\N{\mathbb N}
\def\Z{\mathbb Z}
\def\even{{\mathrm{(e)}}}
\def\odd{{\mathrm{(o)}}}
\def\U{U(\mathfrak{so}_3)}
\def\Uq{U_q^{\prime }(\mathfrak{so}_3)}
\def\e{\varepsilon}
\newcommand{\floor}[1]{\left\lfloor #1 \right\rfloor}
\title[Askey--Wilson algebras and $\mathfrak{so}_{3}$]
{The Askey--Wilson algebras, the Lie algebra $\mathfrak{so}_{3}$, and their fermionic realizations}
\author{Hau-Wen Huang}
\address[H.-W. Huang]{
Department of Mathematics\\
National Central University\\
Taoyuan, Taiwan}
\email{hauwenh@math.ncu.edu.tw}
\begin{document}

\begin{abstract}
This paper establishes a comprehensive algebraic framework linking the Lie algebra $\mathfrak{so}_{3}$ to the Askey--Wilson algebras. First, we provide a manifestly symmetric reformulation of the algebra homomorphism from the universal Racah algebra $\Re$ to $U(\mathfrak{sl}_2)$ by exploiting a Lie algebra isomorphism between $\mathfrak{sl}_{2}$ and $\mathfrak{so}_{3}$. This perspective facilitates a natural extension to the quantum setting, where we construct an explicit algebra homomorphism from the universal Askey--Wilson algebra $\triangle_{q^4}$ to the nonstandard quantum algebra $U_{q}^{\prime}(\mathfrak{so}_{3})$. By viewing the finite-dimensional irreducible $U_{q}^{\prime}(\mathfrak{so}_{3})$-modules of classical type as $\triangle_{q^4}$-modules, we demonstrate that the decomposition patterns perfectly parallel the branching rules of $U(\mathfrak{so}_3)$ over $\Re$. Furthermore, we extend this correspondence to the fermionic setting by establishing algebra isomorphisms between the skew group rings over $U(\mathfrak{so}_3)$ and $U_q'(\mathfrak{so}_3)$ and their associated anticommutator spin algebras. Collectively, these results provide a unified correspondence that bridges the gap between integrable algebraic structures, quantum groups, and their fermionic analogues.
\end{abstract}

\maketitle

{\footnotesize{\bf Keywords:} 
Askey--Wilson algebras, skew group rings, nonstandard quantum algebras, anticommutator spin algebras}

{\footnotesize{\bf MSC2020:} 33D80, 16S35, 81R50}

\allowdisplaybreaks

\section{Introduction}\label{sec:intro}

The Askey--Wilson algebras provide the foundational algebraic  structures for characterizing the bispectral property of orthogonal polynomials within the classical and $q$-Askey schemes. Since their inception \cite{hidden_sym,LP&dual}, these algebras have emerged as a unifying thread across diverse mathematical landscapes. This influence 
originates from the fine-grained analysis of representation theory 
\cite{Askeyscheme,Vidunas:2007,Huang:2015,Huang:2021,Huang:2026,SH:2019-1,Huang:BImodule} 
and the realizations within Lie algebras and the symmetries of superintegrable models \cite{K&sl2,gvz2013,SH:2020,R&LD2014,BI2014-2,Huang:CG,CG2013,Vinet2019}, which provide the algebraic backbone for the Askey scheme.
This impact naturally permeates the discrete geometry of $P$- and $Q$-polynomial association schemes \cite{TerAlgebraI,TerAlgebraII,TerAlgebraIII,BannaiIto1984,Huang:CG&Johnson,Huang:CG&Grassmann,Hamming:2021,hypercube2002,halved:2023,halved:2024,odd:2026}, 
the fundamental connections to double affine Hecke algebras \cite{koo07,koo08,daha&AW,gvz2013,nilDAHA&qHahn:2018,daha&Z3, Huang:R<BImodules,R&BI2015,Huang:AW&DAHAmodule,DAHA&OP:2019}, 
the symmetries of the Yang--Baxter equation and universal $R$-matrices \cite{Huang:RW,RW:2020,qBI2018,qAW&BI:2019,qAW&Skein:2025}, and the algebraic constructions of spin models \cite{cur2007-1,cur2007-2,cur1999,nomura2021,nomura2025}. 
A prominent member of this family, the Racah algebras, dates back to the 1965 study of angular momentum coupling in quantum mechanics \cite{Levy1965}, predating the formal introduction of Askey--Wilson algebras. They govern the bispectral property of Wilson and Racah polynomials at the apex of the classical Askey scheme. 
To unify these diverse manifestations, the universal Racah algebra $\Re$ was formulated as a comprehensive framework.

From the perspective of representation theory, the universal enveloping algebra $U(\mathfrak{sl}_{2})$ provides a natural environment for realizing the Askey--Wilson algebras. 
This is exemplified by the action of the Chevalley generators $E,F,H$ as raising, lowering, and weight operators on the Boolean lattice of subsets \cite{sl2&poset1980, sl2&poset1982}. 
This viewpoint has illuminated a series of connections between $U(\mathfrak{sl}_{2})$ and the Askey--Wilson algebras in relation to hypercubes, halved cubes, and the odd graphs  as well as Hamming, Johnson, and  Grassmann schemes \cite{halved:2024, hypercube2002,odd:2026,Huang:CG&Hamming,
Huang:CG&Johnson,Huang:CG&Grassmann}. 
For instance, an algebra homomorphism from $\Re$ into $U(\mathfrak{sl}_2)$ (cf. Theorem \ref{thm:R->U(sl2)}) was recently established \cite{halved:2024}, drawing motivation from the geometric link between Johnson graphs and hypercubes. However, while this construction provides structural insights, its expression in terms of $E,F,H$ involves cumbersome linear combinations that obscure the inherent cyclic symmetry of the Racah relations.

The first part of this paper demonstrates that a more transparent and canonical reformulation arises by transitioning to the Lie algebra $\mathfrak{so}_{3}$. By exploiting the Cartesian generators $I_1,I_2,I_3$, we reveal that the generators $A,B,C$ of $\Re$ map to remarkably symmetric quadratic forms, as established in Theorem \ref{thm:R->U(so3)}. This perspective not only clarifies the classical correspondence but also provides the essential foundation for extending the theory to the quantum setting. The primary goal of this paper is to establish a canonical $q$-analogue of this classical relationship. 
To this end, we consider the universal Askey--Wilson algebra $\triangle_q$, which governs the bispectral property of Askey--Wilson and Racah polynomials at the peak of the $q$-Askey scheme  \cite{uaw2011}.
In the classical limit $q\to 1$, both the Askey--Wilson and $q$-Racah polynomials degenerate into Wilson and Racah polynomials, reflecting the transition from the $q$-Askey scheme to the classical Askey scheme. Correspondingly, the algebra $\triangle_{q}$ emerges as the canonical $q$-analogue of the universal Racah algebra $\Re$.  
To preserve the structural integrity and cyclic symmetry found in the realization of $\Re$ within $U(\mathfrak{so}_{3})$, we adopt the nonstandard quantum algebra $U_{q}^{\prime }(\mathfrak{so}_{3})$ \cite{so31991} as the natural quantum counterpart. Unlike the standard Drinfel'd--Jimbo quantum groups defined via Cartan subalgebras and root vectors, $U_{q}^{\prime }(\mathfrak{so}_{3})$ is constructed by deforming the Cartesian generators of $\mathfrak{so}_{3}$. As emphasized by Gavrilik and Klimyk, this nonstandard approach is essential for preserving the classical branching structure, thereby enabling the construction of representations through Gel'fand--Tsetlin-type formulas---a feature lost in the standard Drinfel'd--Jimbo-type deformation of $\mathfrak{so}_{n}$.

By focusing on the algebra $\triangle_{q^4}$, we construct an explicit algebra homomorphism into $\Uq$, presented in Theorem \ref{thm:UAW->Uq(so3)}, wherein the generators $A,B,C$ of $\triangle_{q^4}$ are mapped to symmetric quadratic expressions in the Cartesian generators. Notably, this mapping is conceptually distinct from previously known $\mathfrak{sl}_2$-type homomorphisms \cite{uaw&equit2011}, but rather an independent realization rooted in the intrinsic $q$-rotational symmetry of $\Uq$. 
This $q$-analogue is further validated from a representation-theoretic perspective. For $q$ not a root of unity, it is well-established that finite-dimensional irreducible $\Uq$-modules are classified into classical and non-classical types \cite{qso3:1998, qso3:1999}. The classical type modules are of particular significance as they recover the finite-dimensional irreducible $U(\mathfrak{so}_{3})$-modules in the limit $q\to 1$. 
It was shown in \cite{halved:2024} that irreducible $U(\mathfrak{so}_{3})$-modules decompose into $\Re$-modules according to explicit branching rules (Theorems \ref{thm:dec_Lne} and \ref{thm:dec_Lno}). 
We demonstrate that the decomposition patterns of classical-type $\Uq$-modules over $\triangle _{q^{4}}$ (Theorems \ref{thm:decLn_even} and \ref{thm:decLn_odd}) perfectly parallel these classical counterparts. This remarkable one-to-one correspondence in the branching rules underscores the representation-theoretic rigidity of our construction, providing compelling evidence for its status as the canonical $q$-analogue of the relationship between $\Re$ and $\U$.

Finally, we extend this symmetric correspondence to the fermionic realm by considering the anticommutator spin algebra  $\mathcal{A}$ \cite{ACSA2003}.
By constructing an ingenious isomorphism between the skew group rings of $\mathbb{Z}/2\mathbb{Z}$ over $U(\mathfrak{so}_{3})$ and $\mathcal{A}$, we reveal that the universal Racah algebra $\Re$ admits a natural realization within $\mathcal{A}$ (Theorems \ref{thm:AZ/2Z->UZ/2Z} and \ref{thm:R->A}). 
This construction is further extended to the quantum setting, where a parallel isomorphism connects the skew group rings of $\mathbb{Z}/2\mathbb{Z}$ over $\Uq$ and its associated quantum fermionic analogue (Theorems \ref{thm:AqZ/2Z->UqZ/2Z} and \ref{thm:UAW->Aq}). 
Collectively, these results intertwine integrable algebraic structures, quantum groups, and fermionic systems into a unified and coherent architecture, transcending previous cumbersome formulations.

The paper is organized as follows. Section \ref{s:R->U(so3)} reformulates the algebra homomorphism $\Re \to U(\mathfrak{sl}_{2})$ into a manifestly symmetric mapping $\Re\to \U$. Section \ref{s:UAW->Uq(so3)} introduces the canonical $q$-analogue $\triangle_{q^4} \to \Uq$ and provides a rigorous verification. After reviewing the branching rules for $\U$-modules as $\Re$-modules in Section \ref{s:dec_classical}, we establish the decomposition rules for $\Uq$-modules viewed as $\triangle_{q^{4}}$-modules in Section \ref{s:dec_qanalog}. Section \ref{s:anticommutator} utilizes skew group ring isomorphisms to realize both $\Re$ and its quantum counterpart within anticommutator spin algebras. Finally, we discuss potential higher-rank generalizations in Section \ref{s:remark}.

\section{A manifestly symmetric relationship between $\Re$ and $\mathfrak{so}_{3}$}\label{s:R->U(so3)}

Throughout this paper, all algebras are assumed to be unital and associative, and all algebra homomorphisms are unital. For any two elements $x,y$ of an algebra, we write 
$$
[x,y]=xy-yx,
\qquad 
\{x,y\}=xy+yx.
$$
Let $\N$ be the set of all nonnegative integers and $\Z$ be the additive group of integers.
Let $\imi$ denote the imaginary unit in the complex field $\C$.

In this section, we recast the representation of $\Re$ into a more symmetric form by exploiting a Lie algebra isomorphism between $\mathfrak{sl}_2$ and $\mathfrak{so}_3$. We begin by recalling the formal definition of the universal Racah algebra.

\begin{defn}
[\!\!\cite{Levy1965,zhedanov1988,SH:2017-1}]
\label{defn:URA}
The {\it universal Racah algebra} $\Re$ is an algebra over $\C$ defined by generators and relations. The generators are $A,B,C,\Delta$ and the relations state that 
\begin{align}
\label{URA-1}
[A,B]=[B,C]=[C,A]=2\Delta
\end{align}
and the elements
\begin{align}
&[A,\Delta]+ AC- BA,
\label{URA-2}
\\
&[B,\Delta]+BA-CB,
\label{URA-3}
\\
&[C,\Delta]+CB-AC
\label{URA-4}
\end{align}
are central in $\Re$. In view of \eqref{URA-1}, the algebra $\Re$ is generated by $A,B,C$. 
We write $\alpha,\beta,\gamma$ for the central elements \eqref{URA-2}--\eqref{URA-4} of $\Re$, respectively.
\end{defn}

The Lie algebra $\mathfrak{sl}_2$ consists of all complex $2\times 2$ matrices with trace zero. The Chevalley basis for $\mathfrak{sl}_2$ is 
$$
E=\begin{pmatrix}
0 &1
\\
0 &0
\end{pmatrix},
\qquad 
F=\begin{pmatrix}
0 &0
\\
1 &0
\end{pmatrix},
\qquad 
H=\begin{pmatrix}
1 &0
\\
0 &-1
\end{pmatrix}.
$$
These elements satisfy the relations
\begin{align}
[H,E]&=2E,
\label{sl2-1}
\\
[H,F]&=-2F,
\label{sl2-2}
\\
[E,F]&=H.
\label{sl2-3}
\end{align}
The universal enveloping algebra $U(\mathfrak{sl}_{2})$ of $\mathfrak{sl}_{2}$ is the algebra over $\mathbb{C}$ generated by $E,F,H$ subject to \eqref{sl2-1}--\eqref{sl2-3}. 
Motivated by the discrete geometry of Johnson graphs, the following realization of $\Re$ in $U(\mathfrak{sl}_{2})$ was obtained in terms of $E,F,H$:

\begin{thm}
[Theorem 1.3, \cite{halved:2024}]
\label{thm:R->U(sl2)}
There exists a unique algebra homomorphism $\Re\to U(\mathfrak{sl}_2)$ that sends 
\begin{eqnarray*}
A &\mapsto & \frac{(E+F-2)(E+F+2)}{16},
\\
B &\mapsto & \frac{(H-2)(H+2)}{16},
\\
C &\mapsto & \frac{(\imi E-\imi F-2)(\imi E-\imi F+2)}{16}.
\end{eqnarray*}
Moreover, the homomorphism maps each of $\alpha,\beta,\gamma$ to zero.
\end{thm}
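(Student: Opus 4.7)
Uniqueness is immediate from the presentation of $\Re$ by generators and relations. For existence and the vanishing statement, the plan is to perform a direct verification in $U(\mathfrak{sl}_2)$ after first recasting the three target elements into a cyclically symmetric form.

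To this end, introduce the basis
\[
e_1 := E+F, \qquad e_2 := H, \qquad e_3 := \imi(E-F)
\]
of $\mathfrak{sl}_2$. A short computation from \eqref{sl2-1}--\eqref{sl2-3} yields the cyclic relations
\[
[e_1,e_2] = 2\imi e_3, \qquad [e_2,e_3] = 2\imi e_1, \qquad [e_3,e_1] = 2\imi e_2,
\]
which are the complexified $\mathfrak{so}_3$ relations alluded to in the abstract. Under this change of basis the proposed images of $A,B,C$ take the uniform form $\phi(A)=(e_1^2-4)/16$, $\phi(B)=(e_2^2-4)/16$, $\phi(C)=(e_3^2-4)/16$, so the entire problem becomes invariant under cyclic permutation of indices. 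A useful auxiliary observation is that $e_1^2+e_2^2+e_3^2 = 2(EF+FE)+H^2$ is twice the $\mathfrak{sl}_2$ Casimir, hence central in $U(\mathfrak{sl}_2)$.

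To verify the relations \eqref{URA-1}, I would apply the Leibniz rule for commutators twice to obtain
\[
[e_i^2,e_j^2] = 2\imi\,\{e_j,\{e_i,e_k\}\}
\]
for every cyclic permutation $(i,j,k)$ of $(1,2,3)$. Consequently $[\phi(A),\phi(B)]$, $[\phi(B),\phi(C)]$, $[\phi(C),\phi(A)]$ are proportional, with the same nonzero constant, to the three anticommutators $\{e_2,\{e_1,e_3\}\}$, $\{e_3,\{e_2,e_1\}\}$, $\{e_1,\{e_3,e_2\}\}$ respectively. Any pairwise difference of these reduces after expansion to a commutator of the form $[[e_a,e_b],e_c]$, which vanishes because $[e_a,e_b]$ is a scalar multiple of $e_c$. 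Hence $[\phi(A),\phi(B)]=[\phi(B),\phi(C)]=[\phi(C),\phi(A)]$, and one sets $\phi(\Delta):=\tfrac12[\phi(A),\phi(B)]$, securing \eqref{URA-1}.

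For the vanishing of $\alpha,\beta,\gamma$, the cyclic symmetry reduces the task to checking $\phi(\alpha)=0$, i.e.\ $[\phi(A),\phi(\Delta)]+\phi(A)\phi(C)-\phi(B)\phi(A)=0$, after which $\phi(\beta)$ and $\phi(\gamma)$ vanish by permuting indices; this simultaneously yields the strengthened conclusion that these elements are not merely central but zero. The identity unfolds, via the bracket $[e_1^2,\{e_1,\{e_2,e_3\}\}]$ and the quadratic products $\phi(A)\phi(C)$, $\phi(B)\phi(A)$, into a noncommutative polynomial equality in $e_1,e_2,e_3$ that I would verify by reducing every monomial to a fixed normal form using the cyclic bracket relations; the central element $e_1^2+e_2^2+e_3^2$ provides the decisive cancellations. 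The main obstacle is the sheer bookkeeping in this final step: even in the symmetric form, expanding $[\phi(A),\phi(\Delta)]$ and matching it against $\phi(B)\phi(A)-\phi(A)\phi(C)$ produces a large number of terms, and keeping track of signs and orderings is where any slip would occur.
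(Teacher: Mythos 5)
The paper offers no proof of this statement to compare against: it is imported verbatim as Theorem~1.3 of \cite{halved:2024}, and the present note only reformulates it (as Theorem~\ref{thm:R->U(so3)}) via the basis change you rediscover. Your $e_1,e_2,e_3$ are, up to scalars, the images of $I_1,I_2,I_3$ under the paper's isomorphism $\mathfrak{so}_3\to\mathfrak{sl}_2$, so your symmetrization is exactly the paper's own Theorem~\ref{thm:R->U(so3)} in disguise; that part of your plan is sound. The Leibniz computation is also correct: one gets $[e_i^2,e_j^2]=2\imi\,\{e_i,\{e_j,e_k\}\}$, which agrees with your $2\imi\,\{e_j,\{e_i,e_k\}\}$ precisely because the difference is $[[e_i,e_j],e_k]=2\imi[e_k,e_k]=0$, and all six nested anticommutators coincide, so the three commutators are equal and $\phi(\Delta):=\tfrac12[\phi(A),\phi(B)]$ is well defined. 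The cyclic permutation $e_1\mapsto e_2\mapsto e_3\mapsto e_1$ does extend to an automorphism of $\U$ and fixes $\phi(\Delta)$ (using the equality of the three commutators just established), so reducing $\beta,\gamma$ to $\alpha$ is legitimate.

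The one genuine shortfall is that the decisive step --- the verification that $\phi(\alpha)=0$ --- is announced rather than performed. Writing $a=e_1^2$, $b=e_2^2$, $c=e_3^2$, the claim $[\phi(A),\phi(\Delta)]=\phi(B)\phi(A)-\phi(A)\phi(C)$ is equivalent to the concrete identity
\begin{equation*}
[\,a,[a,b]\,]\;=\;32\,(ba-ac)-128\,(b-c)
\end{equation*}
in $\U$. I have checked by normal-ordering (moving all $e_1$'s to the left using $e_je_1=e_1e_j\pm 2\imi e_k$, etc.) that both sides reduce to
$8e_1^2e_2^2-8e_1^2e_3^2-64\imi\,e_1e_2e_3-64e_1^2+32e_2^2-32e_3^2$,
so the identity holds and your plan does close. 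But as submitted, the proof stops exactly where the content lies; you should either carry out this reduction explicitly (it is about a dozen monomials once you exploit the symmetry), or observe that the whole computation takes place inside the subalgebra generated by $e_1$ and the Casimir's complement and can be certified on a faithful family of finite-dimensional modules. Until one of these is done, the ``Moreover'' clause --- and with it the existence of the homomorphism, since centrality of the images of $\alpha,\beta,\gamma$ is one of the defining relations of $\Re$ --- is asserted but not proved.
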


While Theorem \ref{thm:R->U(sl2)} provides a concrete realization of $\Re$ within $U(\mathfrak{sl}_{2})$, the assignment of $A,B,C$ appears somewhat asymmetric, obscuring the intrinsic $\Z/3\Z$-symmetry of the relations \eqref{URA-2}--\eqref{URA-4}. To resolve this, we shift our focus to the Lie algebra $\mathfrak{so}_{3}$, whose Cartesian basis is naturally suited to reflect this cyclic invariance.

Recall that the Lie algebra $\mathfrak{so}_3$ consists of all complex $3\times 3$ skew-symmetric matrices. A Cartesian basis for $\mathfrak{so}_3$ is 
$$
I_1=\begin{pmatrix}
0 &1 &0
\\
-1 &0 &0
\\
0 &0 &0
\end{pmatrix},
\qquad 
I_2=\begin{pmatrix}
0 &0 &0
\\
0 &0 &1
\\
0 &-1 &0
\end{pmatrix},
\qquad 
I_3=\begin{pmatrix}
0 &0 &1
\\
0 &0 &0 
\\
-1 &0 &0
\end{pmatrix}.
$$
This basis satisfies the relations
\begin{align}
[I_1,I_2]&=I_3,
\label{so3-1}
\\
[I_2,I_3]&=I_1,
\label{so3-2}
\\
[I_3,I_1]&=I_2.
\label{so3-3}
\end{align}
By \eqref{sl2-1}--\eqref{sl2-3} and \eqref{so3-1}--\eqref{so3-3}, there exists a Lie algebra isomorphism $\mathfrak{so}_3\to \mathfrak{sl}_2$ given by
\begin{eqnarray*}
I_1 &\mapsto & \frac{\imi(E+F)}{2},
\\
I_2 &\mapsto &  \frac{\imi H}{2},
\\
I_3 &\mapsto & \frac{E-F}{2}.
\end{eqnarray*}
This Lie algebra isomorphism extends to an algebra isomorphism $U(\mathfrak{so}_3)\to U(\mathfrak{sl}_2)$. Through this identification, the realization of  $\Re$ in Theorem \ref{thm:R->U(sl2)} is revealed to admit the following remarkably symmetric form:

\begin{thm}
\label{thm:R->U(so3)}
There exists a unique algebra homomorphism $\Re\to U(\mathfrak{so}_3)$ that
sends
\begin{eqnarray*}
A &\mapsto & 
-\frac{I_1^2+1}{4},
\\
B &\mapsto & 
-\frac{I_2^2+1}{4},
\\
C &\mapsto & 
-\frac{I_3^2+1}{4}.
\end{eqnarray*}
Moreover, the homomorphism maps each of $\alpha,\beta,\gamma$ to zero.
\end{thm}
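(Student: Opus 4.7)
The plan is to deduce this theorem as a direct translation of Theorem \ref{thm:R->U(sl2)} through the isomorphism $U(\mathfrak{so}_3)\to U(\mathfrak{sl}_2)$ described just above the statement. Concretely, let $\varphi:U(\mathfrak{so}_3)\to U(\mathfrak{sl}_2)$ denote the algebra isomorphism obtained by extending the given Lie algebra isomorphism, and let $\psi:\Re\to U(\mathfrak{sl}_2)$ denote the homomorphism of Theorem \ref{thm:R->U(sl2)}. I would then define the desired map as $\varphi^{-1}\circ\psi$, which is manifestly an algebra homomorphism $\Re\to U(\mathfrak{so}_3)$.

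To verify that this composition has the advertised values on the generators, I only need to check that $\varphi$ sends each of the proposed images in $U(\mathfrak{so}_3)$ to the corresponding image of $\psi$ in $U(\mathfrak{sl}_2)$. For instance, since $\varphi(I_1)=\imi(E+F)/2$, one has $\varphi(I_1^2+1)=-(E+F)^2/4+1$, so $\varphi\bigl(-(I_1-\imi)(I_1+\imi)/4\bigr)=((E+F)^2-4)/16=(E+F-2)(E+F+2)/16=\psi(A)$, and the analogous factorization $(x-\imi)(x+\imi)=x^2+1$ handles the cases of $B$ and $C$ just as mechanically. This routine calculation is the only real ``content'' of the argument, and it is genuinely short because all three generators admit the same quadratic-completion trick.

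Uniqueness is automatic from the fact that $A,B,C$ generate $\Re$, as observed after Definition \ref{defn:URA}. Finally, since $\psi$ sends each of $\alpha,\beta,\gamma$ to $0$ by Theorem \ref{thm:R->U(sl2)} and $\varphi^{-1}(0)=0$, the composed homomorphism also annihilates these central elements.

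I do not expect any genuine obstacle here: the proof is essentially a change of variables, and the only thing to be careful about is keeping track of the factors of $\imi$ and of $1/2$ correctly so that the scalings in the statement line up with those in Theorem \ref{thm:R->U(sl2)}. The symmetric form of the images under $\mathfrak{so}_3$ is, indeed, exactly what makes this reformulation attractive.
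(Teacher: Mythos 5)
Your proposal is correct and matches the paper's approach exactly: the paper obtains Theorem \ref{thm:R->U(so3)} as a direct reformulation of Theorem \ref{thm:R->U(sl2)} via the algebra isomorphism $U(\mathfrak{so}_3)\to U(\mathfrak{sl}_2)$, which is precisely your $\varphi^{-1}\circ\psi$ construction. Your verification of the images of $A,B,C$ (including the sign bookkeeping for $I_3\mapsto (E-F)/2$) checks out.
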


\section{The canonical $q$-analogue of the realization of $\Re$ within $\U$}\label{s:UAW->Uq(so3)}

Throughout, 
let $\C^\times$ be the multiplicative group of all nonzero complex numbers, and fix $q\in \C^\times $ that is not a root of unity. For any two elements $x,y$ in an algebra and any nonzero scalar $\nu$, we adopt the following $\nu$-commutator and $\nu $-anticommutator:
$$
[x,y]_\nu=\nu x y-\nu^{-1} yx,
\qquad 
\{x,y\}_\nu=\nu x y+\nu^{-1} y x.
$$
To underscore the structural correspondence between the classical and quantum cases, we will maintain a notational system for the $q$-analogues that coincides with the one used in the classical setting.

Just as $\Re$ governs the bispectral property of the Racah polynomials at the top of the classical Askey scheme, the algebra $\triangle_{q}$ plays a corresponding role for the Askey--Wilson and $q$-Racah polynomials at the peak of the $q$-Askey scheme. 
Thus, the universal Askey--Wilson algebra $\triangle_q$ stands as the canonical $q$-analogue of $\Re$, defined as follows:

\begin{defn}
[\!\!{\cite[Definition 1.2]{uaw2011}}]
\label{defn:UAW}
The {\it universal Askey--Wilson algebra} $\triangle_q$ is the algebra over $\C$ defined by the generators $A,B,C$ and the relations stating that 
\begin{align}\label{UAW}
A+\frac{[B,C]_q}{q^2-q^{-2}},
\qquad
B+\frac{[C,A]_q}{q^2-q^{-2}},
\qquad
C+\frac{[A,B]_q}{q^2-q^{-2}}
\end{align}
are central in $\triangle_q$. Let $\alpha,\beta,\gamma$ denote these central elements \eqref{UAW} multiplied by $q+q^{-1}$.
\end{defn}

To preserve the structural integrity and cyclic symmetry established in Theorem \ref{thm:R->U(so3)}, the nonstandard quantum algebra $U_{q}^{\prime }(\mathfrak{so}_{3})$ emerges as the natural  quantum analogue of $U(\mathfrak{so}_{3})$.

\begin{defn}
[\!\!{\cite[Section 2]{so31991}}]
\label{defn:Uq}
The {\it nonstandard quantum algebra} $\Uq$ of $U(\mathfrak{so}_3)$ is the algebra over $\C$ generated by $I_1,I_2,I_3$ subject to the relations 
\begin{align}
[I_1, I_2]_q
&=I_3,
\label{Uq-1}
\\
[I_2, I_3]_q
&=I_1,
\label{Uq-2}
\\
[I_3, I_1]_q
&=I_2.
\label{Uq-3}
\end{align}
\end{defn}

The element 
\begin{gather}
\label{Lambda}
\Lambda=q^2 I_1^2+q^{-2} I_2^2+q^2 I_3^2
-q(q^2-q^{-2})I_1I_2I_3
\end{gather}
lies in the center of $\Uq$ and is known as the {\it Casimir element} of $\Uq$.

To preserve the cyclic symmetry of the quadratic realization, we consider the universal Askey--Wilson algebra at the parameter $q^{4}$. This choice aligns the relations \eqref{UAW} with the Cartesian $q$-deformations \eqref{Uq-1}--\eqref{Uq-3}, yielding the following canonical $q$-analogue of Theorem \ref{thm:R->U(so3)}. For simplicity, we retain the symbols $A,B,C$ and $\alpha ,\beta ,\gamma$ for the generators and central elements of $\triangle_{q^4}$, respectively.

\begin{thm}
\label{thm:UAW->Uq(so3)}
There exists a unique algebra homomorphism $\triangle_{q^4}\to \Uq$ that sends 
\begin{eqnarray}
A &\mapsto & 2-(q^2-q^{-2})^2 I_1^2,
\label{UAW->Uq(so3):A}
\\
B &\mapsto & 2-(q^2-q^{-2})^2 I_2^2,
\label{UAW->Uq(so3):B}
\\
C &\mapsto & 2-(q^2-q^{-2})^2 I_3^2.
\label{UAW->Uq(so3):C}
\end{eqnarray}
Moreover, the homomorphism maps each of $\alpha,\beta,\gamma$ to 
\begin{gather}
\label{UAW->Uq(so3):abc}
2(q^2+q^{-2})^2-(q^2-q^{-2})(q^4-q^{-4}) \Lambda.
\end{gather}
\end{thm}

The remainder of this section is devoted to the proof of Theorem \ref{thm:UAW->Uq(so3)}.

\begin{lem}
\label{lem:I1I2I3}
The following equations hold in $\Uq$:
\begin{enumerate}
\item $I_2 I_1 I_3=q^2 I_1 I_2 I_3-q I_3^2$.

\item $I_1 I_3 I_2=q^2 I_1 I_2 I_3-q I_1^2$.

\item $I_2 I_3 I_1=I_1 I_2 I_3+q^{-1} I_2^2-q^{-1} I_3^2$.

\item $I_3 I_1 I_2=I_1 I_2 I_3+q^{-1} I_2^2-q^{-1} I_1^2$.
\end{enumerate}
\end{lem} 
\begin{proof}
(i): Rearranging relation \eqref{Uq-1} as 
$$
I_{2}I_{1}=q^2 I_{1}I_{2}-q I_{3}
$$ and multiplying both sides by $I_{3}$ from the right yields (i).

(ii): Similarly, rearranging relation \eqref{Uq-2} as 
$$
I_{3}I_{2}=q^2 I_{2}I_{3}-q I_{1}
$$ 
and multiplying both sides by $I_{1}$ from the left yields (ii).

For (iii) and (iv), we rewrite relation \eqref{Uq-3} as
\begin{gather} \label{I3I1}
I_3 I_1 = q^{-1} I_2 + q^{-2} I_1 I_3.
\end{gather}

(iii): Left-multiplying \eqref{I3I1} by $I_{2}$ gives 
$$
I_{2}I_{3}I_{1}=q^{-1}I_{2}^{2}+q^{-2} I_{2}I_{1}I_{3}.
$$ Substituting the expression for $I_{2}I_{1}I_{3}$ from (i) into this equation yields (iii).

(iv): Right-multiplying \eqref{I3I1} by $I_{2}$ gives 
$$
I_{3}I_{1}I_{2}=q^{-1}I_{2}^{2}+q^{-2} I_{1}I_{3}I_{2}.
$$ 
Substituting the expression for $I_{1}I_{3}I_{2}$ from (ii) into this equation yields (iv).
\end{proof}

\begin{lem}
\label{lem:[I1^2,I2^2]_q^2}
The following equations hold in $\Uq$:
\begin{enumerate}
\item $q^2 I_1 I_2 I_1 I_2-q^{-2} I_2 I_1 I_2 I_1=2 q I_1 I_2 I_3-I_1^2+I_2^2-I_3^2$.

\item $[I_1^2, I_2^2]_{q^4}=
q(q^2+q^{-2})^2 I_1 I_2 I_3-(q^2+q^{-2})(q^2 I_1^2-q^{-2}I_2^2+q^{-2} I_3^2)$.
\end{enumerate}
\end{lem}
\begin{proof}
(i): We expand the left-hand side of (i) by adding and subtracting $I_{2}I_{1}^{2}I_{2}$: 
$$
q^2 I_1 I_2 I_1 I_2 - q^{-2} I_2 I_1 I_2 I_1
= q [I_1,I_2]_q I_1 I_2
+  q^{-1} I_2 I_1 [I_1,I_2]_q.
$$
By \eqref{Uq-1} this expression simplifies to
$$
q I_3 I_1 I_2 + q^{-1} I_2 I_1 I_3.
$$
Applying Lemma \ref{lem:I1I2I3}(i) and (iv) to these cubic terms yields the right-hand side of (i).

(ii): Multiplying \eqref{Uq-1} from the left and right by $I_1$ and $I_2$ respectively (and vice versa for the second case), we have
\begin{align*}
I_1^2 I_2^2 &= 
q^{-2} I_1 I_2 I_1 I_2+ q^{-1} I_1 I_3 I_2,
\\
I_2^2 I_1^2 &=
q^2 I_2 I_1 I_2 I_1-q I_2 I_3 I_1.
\end{align*}
Therefore the left-hand side of (ii) can be written as
$$
q^2 I_1 I_2 I_1 I_2-q^{-2} I_2 I_1 I_2 I_1+q^3 I_1 I_3 I_2+q^{-3} I_2 I_3 I_1.
$$
Substituting Lemma \ref{lem:[I1^2,I2^2]_q^2}(i) and applying Lemma \ref{lem:I1I2I3}(ii) and (iii), we obtain the right-hand side of (ii).
\end{proof}

By Definition \ref{defn:Uq},
 the cyclic symmetry of the relations \eqref{Uq-1}--\eqref{Uq-3} induces a unique $\Z/3\Z$-action on $\Uq$ as a group of algebra automorphisms such that the generator $1+3\Z$ maps 
\begin{eqnarray*}
(I_1,I_2,I_3)
&\mapsto &
(I_2,I_3,I_1).
\end{eqnarray*}

\begin{lem}
\label{lem:Uq&Z/3Z}
The Casimir element $\Lambda$ of $\Uq$ is invariant under the $\Z/3\Z$-action.
\end{lem}
\begin{proof}
Since $1+3\Z$ generates $\Z/3\Z$ it suffices to show that $(1+3\Z)(\Lambda)=\Lambda$. 
Applying $1+3\Z$ to both sides of \eqref{Lambda} yields 
$$
(1+3\Z)(\Lambda)=q^2 I_1^2+q^2 I_2^2+q^{-2} I_3^2
-q(q^2-q^{-2})I_2 I_3 I_1.
$$
A direct calculation shows 
$$
(1+3\Z)(\Lambda)-\Lambda=(q^2-q^{-2})(I_2^2-I_3^2)
-q(q^2-q^{-2})(I_2 I_3 I_1-I_1 I_2 I_3).
$$
According to Lemma \ref{lem:I1I2I3}(iii) the difference $I_2 I_3 I_1-I_1 I_2 I_3=q^{-1}(I_2^2-I_3^2)$.
Substituting this into the above identity confirms that $(1+3\Z)(\Lambda)=\Lambda$.
The lemma follows.
\end{proof}

\begin{proof}[Proof of Theorem \ref{thm:UAW->Uq(so3)}] 
Throughout this proof, let $A,B,C$ denote the right-hand sides of \eqref{UAW->Uq(so3):A}--\eqref{UAW->Uq(so3):C}, respectively.
To establish Theorem \ref{thm:UAW->Uq(so3)}, since $\Lambda$ is central in $\Uq$, it suffices to show that the elements
\begin{gather}
\label{e:UAW->Uq(so3)}
A+\frac{[B,C]_{q^4}}{q^8-q^{-8}},
\qquad 
B+\frac{[C,A]_{q^4}}{q^8-q^{-8}},
\qquad 
C+\frac{[A,B]_{q^4}}{q^8-q^{-8}}
\end{gather}
are all equal to  \eqref{UAW->Uq(so3):abc} divided by $q^4+q^{-4}$.

A direct expansion of the $q^4$-commutator yields
$$
[A,B]_{q^4}=
(q^2-q^{-2})^4[I_1^2,I_2^2]_{q^4}
-2(q^2-q^{-2})^2(q^4-q^{-4})(I_1^2+I_2^2)
+4(q^4-q^{-4}).
$$
Applying Lemma \ref{lem:[I1^2,I2^2]_q^2}(ii) shows that the third element in \eqref{e:UAW->Uq(so3)} is 
\begin{gather*}
\frac{(q^2-q^{-2})(q^4-q^{-4})}{q^4+q^{-4}} \left( q(q^2-q^{-2}) I_1 I_2 I_3-q^2 I_1^2-q^{-2} I_2^2-q^2 I_3^2 +2\frac{q^2+q^{-2}}{(q^2-q^{-2})^2} \right).
\end{gather*}
In view of \eqref{Lambda}, the expression in the parenthesis simplifies to
$$
2\frac{q^2+q^{-2}}{(q^2-q^{-2})^2}-\Lambda.
$$
Consequently, we obtain 
$$
C+\frac{[A,B]_{q^4}}{q^8-q^{-8}}
=
\frac{2(q^2+q^{-2})^{2}-(q^2-q^{-2})(q^4-q^{-4})\Lambda }{q^4+q^{-4}},
$$
which is precisely \eqref{UAW->Uq(so3):abc} divided by $q^4+q^{-4}$.

By the $\Z/3\Z$-invariance of $\Lambda$ established in Lemma \ref{lem:Uq&Z/3Z} and the cyclic symmetry of the assignments \eqref{UAW->Uq(so3):A}--\eqref{UAW->Uq(so3):C}, the remaining identities in \eqref{e:UAW->Uq(so3)} follow immediately by applying the automorphisms $1+3\Z$ and $2+3\Z$. The result follows.
\end{proof}

\section{Branching rules of finite-dimensional $U(\mathfrak{so}_3)$-modules over $\Re$}
\label{s:dec_classical}

In this section, we recall the branching rules for finite-dimensional irreducible $U(\mathfrak{so}_3)$-modules viewed as $\Re$-modules. 
We begin by providing the necessary background on finite-dimensional irreducible $\Re$-modules.

Let $n\in \N$ and $a,b,c\in \C$ be given. By \cite[Proposition 2.4]{SH:2019-1}, there exists a unique $(n+1)$-dimensional $\Re$-module $V_n(a,b,c)$ up to isomorphism satisfying the following conditions:
\begin{enumerate}
\item[$\bullet$] There exists a basis $\{v_i\}_{i=0}^n$ for $V_n(a,b,c)$ such that 
\begin{align*}
A v_i &= \theta_i v_i + v_{i+1} \qquad (0\leq i\leq n),
\\
B v_i &= \theta_i^* v_i+\varphi_i v_{i-1} \qquad (0\leq i\leq n),
\end{align*}
where $v_{-1}$ and $v_{n+1}$ are interpreted as the zero vector of $V_n(a,b,c)$ and 
\begin{align*}
  \theta_i & =
  \left(a+\frac{n}{2}-i\right)
  \left(a+\frac{n}{2}-i+1\right)
  \qquad
  (0\leq i\leq n),
 \\
  \theta_i^* & =
  \left(b+\frac{n}{2}-i\right)
  \left(b+\frac{n}{2}-i+1\right)
  \qquad
  (0\leq i\leq n),
  \\
  \varphi_i &=i(i-n-1)
  \left(a+b+c+\frac{n}{2}-i+2\right)
  \left(a+b-c+\frac{n}{2}-i+1\right)
  \qquad
  (1\leq i\leq n).
\end{align*}

\item[$\bullet$] The central elements $\alpha,\beta,\gamma$ act on $V_n(a,b,c)$ as scalar multiplication by
\begin{align*}
&(c-b)(c+b+1)
\left(a-\frac{n}{2}\right)
\left(a+\frac{n}{2}+1\right),
\\
&(a-c)(a+c+1)
\left(b-\frac{n}{2}\right)
\left(b+\frac{n}{2}+1\right),
\\
&(b-a)(b+a+1)
\left(c-\frac{n}{2}\right)
\left(c+\frac{n}{2}+1\right),
\end{align*}
respectively.
\end{enumerate}

By \cite[Theorem 4.5]{SH:2019-1} the $\Re$-module $V_n(a,b,c)$ is irreducible if and only if 
$$
a+ b+ c+ 1,
-a+ b+ c,
a-b+ c,
a+ b-c
\not\in 
\left\{
\frac{n}{2}-i 
\;\middle|\;
i=1,2,\ldots, n
\right\}.
$$
A classification result \cite[Theorem 6.3]{SH:2019-1} asserts that any finite-dimensional irreducible $\Re$-module $V$ is isomorphic to $V_n(a,b,c)$ for some $n\in \N$ and $a,b,c\in \C$, where $\dim V=n+1$ and the parameters $a,b,c$ satisfy the aforementioned irreducibility condition. 
In practice, the parameters $a,b,c$ can be identified by referencing \cite[Corollary 6.5]{SH:2019-1}. Specifically, an $(n+1)$-dimensional irreducible $\Re$-module $V$ is isomorphic to $V_{n}(a,b,c)$ if and only if the traces of $A,B,C$ on $V$ are equal to $n+1$ times
\begin{align*}
  a(a+1)+\frac{n(n+2)}{12},
  \qquad
  b(b+1)+\frac{n(n+2)}{12},
  \qquad
  c(c+1)+\frac{n(n+2)}{12},
\end{align*}
respectively.

Having established the classification of finite-dimensional irreducible $\Re$-modules, we now turn our attention to the representation theory of $\U$. 
For each $n\in \N$, there exists a unique $(n+1)$-dimensional irreducible $\U$-module $L_n$ up to isomorphism. 
This module admits a basis $\{u_{i}\}_{i=0}^{n}$ such that the Cartesian generators act as follows:
\begin{align*}
I_1 u_i&=\imi(b_{n-i} u_{i-1}+ b_i u_{i+1})
\qquad (0\leq i\leq n),
\\
I _2 u_i&=\imi \theta_i u_i 
\qquad (0\leq i\leq n),
\\
I_3 u_i&=b_{n-i} u_{i-1}-b_i u_{i+1}
\qquad (0\leq i\leq n),
\end{align*}
where 
\begin{align*}
\theta_i&=\frac{n-2i}{2} \qquad (0\leq i\leq n),
\\
b_i &=\frac{i+1}{2} \qquad (0\leq i\leq n).
\end{align*}
By the complete reducibility of finite-dimensional $\U$-modules, it suffices to study the decomposition of the irreducible $\U$-module $L_n$ into $\Re$-modules. The $\Re$-module $L_n$ is explicitly given as follows.

\begin{lem}
\label{lem:RonLn}
For any $n\in \N$ the actions of $A,B,C$ on the $\Re$-module $L_n$ are as follows:
\begin{align*}
A u_i &=
\bar{b}_{n-i} u_{i-2}
+ \bar{a}_i u_i 
+\bar{b}_i u_{i+2} 
\qquad (0\leq i\leq n),
\\
B u_i &=
\bar{\theta}_i u_i
\qquad (0\leq i\leq n),
\\
C u_i &=
-\bar{b}_{n-i} u_{i-2}
+ \bar{a}_i u_i 
- \bar{b}_i u_{i+2}
\qquad (0\leq i\leq n),
\end{align*}
where $u_{-2}, u_{-1}, u_{n+1}, u_{n+2}$ are interpreted as the zero vector of $L_n$ and 
\begin{align*}
\bar{\theta}_i &= \frac{(n-2i+2)(n-2i-2)}{16}
\qquad (0\leq i\leq n),
\\
\bar{a}_i&=\frac{i(n-i+1)+(i+1)(n-i)}{16}-\frac{1}{4}
\qquad (0\leq i\leq n),
\\
\bar{b}_i&=\frac{(i+1)(i+2)}{16}
\qquad (0\leq i\leq n).
\end{align*}
\end{lem}
\begin{proof}
The formulas are obtained by applying the algebra homomorphism from Theorem \ref{thm:R->U(so3)} to the $U(\mathfrak{so}_{3})$-module $L_n$. 
\end{proof}

We decompose the basis $\{u_i\}_{i=0}^{n}$ according to the parity of the indices. Let 
\begin{align*}
u_i^\even &=u_{2i} \qquad (0\leq i\leq \floor{\tfrac{n}{2}}),
\\
u_i^\odd &=u_{2i+1} \qquad (0\leq i\leq \floor{\tfrac{n-1}{2}}).
\end{align*}
Let $L_n^\even$ and $L_n^\odd$ denote the subspaces of $L_n$ spanned by $\{u_i^\even\}_{i=0}^{\floor{\frac{n}{2}}}$ and 
$\{u_i^\odd\}_{i=0}^{\floor{\frac{n-1}{2}}}$, respectively. 
By construction, the vector space $L_n$ admits the  decomposition:
$$
L_n=L_n^\even\oplus L_n^\odd.
$$
Note that $L_0^\odd$ is the zero space.
It follows from Lemma \ref{lem:RonLn} that $L_n^\even$ and $L_n^\odd$ are $\Re$-submodules of $L_n$. Consequently, $L_n$ is the direct sum of $L_n^\even$ and $L_n^\odd$ as $\Re$-modules. Based on this decomposition, the branching rules for $L_{n}$ are summarized as follows.

\begin{thm}
[\!\!{\cite[Theorem 7.3]{halved:2024}}]
\label{thm:dec_Lne}
For any $n\in \N$ the following statements hold:
\begin{enumerate}
\item If $n$ is odd, then $L_n^\even$ is isomorphic to the irreducible $\Re$-module $V_{\frac{n-1}{2}}(-\frac{1}{4},-\frac{1}{4},-\frac{1}{4})$.

\item If $n=0$, then $L_n^\even$ is isomorphic to the irreducible $\Re$-module $V_0(-\frac{1}{2},-\frac{1}{2},-\frac{1}{2})$.

\item If $n\not=0$ and $n\equiv 0\pmod{4}$, then $L_n^\even$ is isomorphic to a direct sum of the irreducible $\Re$-modules $V_{\frac{n}{4}-1}(\frac{n-4}{8},\frac{n}{8},\frac{n-4}{8})$ and
$V_{\frac{n}{4}}(\frac{n-4}{8},\frac{n-4}{8},\frac{n-4}{8})$.

\item If $n\equiv 2\pmod{4}$, then $L_n^\even$ is isomorphic to a direct sum of the irreducible $\Re$-modules $V_{\frac{n-2}{4}}(\frac{n-6}{8},\frac{n-2}{8},\frac{n-2}{8})$ and
$V_{\frac{n-2}{4}}(\frac{n-2}{8},\frac{n-2}{8},\frac{n-6}{8})$.
\end{enumerate}
\end{thm}

\begin{thm}
[\!\!{\cite[Theorem 7.6]{halved:2024}}]
\label{thm:dec_Lno}
For any integer $n\geq 1$ the following statements hold:
\begin{enumerate}
\item If $n$ is odd, then $L_n^\odd$ is isomorphic to the irreducible $\Re$-module $
V_{\frac{n-1}{2}}(
-\frac{1}{4},-\frac{1}{4},-\frac{1}{4})$.

\item If $n=2$, then $L_n^\odd$ is isomorphic to the irreducible $\Re$-module $V_0(0,-\frac{1}{2},0)$.

\item If $n\not=2$ and $n\equiv2\pmod{4}$, then $L_n^\odd$ is isomorphic to a direct sum of the irreducible $\Re$-modules $V_{\frac{n-6}{4}}(\frac{n-2}{8},\frac{n-2}{8},\frac{n-2}{8})$ and
$V_{\frac{n-2}{4}}(\frac{n-2}{8},\frac{n-6}{8},\frac{n-2}{8})$.

\item If $n\equiv0\pmod{4}$, then $L_n^\odd$ is isomorphic to a direct sum of the irreducible $\Re$-modules 
$V_{\frac{n}{4}-1}(\frac{n-4}{8},\frac{n-4}{8},\frac{n}{8})$ 
and 
$V_{\frac{n}{4}-1}(\frac{n}{8},\frac{n-4}{8},\frac{n-4}{8})$.
\end{enumerate}
\end{thm}

These classical branching rules provide a template for the quantum case. In the next section, we will demonstrate that the $q$-deformed correspondence established in Theorem \ref{thm:UAW->Uq(so3)} preserves these decomposition patterns with remarkable rigidity.

\section{Branching rules of finite-dimensional $\Uq$-modules over $\triangle_{q^4}$}\label{s:dec_qanalog}

In this section, we establish the branching rules for finite-dimensional irreducible $\Uq$-modules viewed as $\triangle_{q^4}$-modules. 
Following the logical structure of the classical case,  we begin by providing the necessary background on finite-dimensional irreducible $\triangle_{q}$-modules.

Let $n\in \N$ and $a,b,c\in \C^\times$ be given. By \cite[Section 4.1]{Huang:2015}, there exists a unique $(n+1)$-dimensional $\triangle_{q}$-module $V_n(a,b,c)$ up to isomorphism satisfying the following conditions:
\begin{enumerate}
\item[$\bullet$] There exists a basis $\{v_i\}_{i=0}^n$ for $V_n(a,b,c)$ such that 
\begin{align*}
A v_i &= \theta_i v_i + v_{i+1} \qquad (0\leq i\leq n),
\\
B v_i &= \theta_i^* v_i+\varphi_i v_{i-1} \qquad (0\leq i\leq n),
\end{align*}
where $v_{-1}$ and $v_{n+1}$ are interpreted as the zero vector of $V_n(a,b,c)$ and  
\begin{align*}
  \theta_i & =
  a q^{2i-n}+a^{-1} q^{n-2i}
  \qquad
  (0\leq i\leq n),
 \\
  \theta_i^* & =
  b q^{2i-n}+b^{-1} q^{n-2i}
  \qquad
  (0\leq i\leq n),
  \\
  \varphi_i &=a^{-1} b^{-1} q^{n+1}
  (q^{i}-q^{-i})
  (q^{i-n-1}-q^{n-i+1})
  \\
  &\qquad \times \; (q^{-i}-a b c q^{i-n-1})
  (q^{-i}-a b c^{-1} q^{i-n-1})
  \qquad
  (1\leq i\leq n).
\end{align*}

\item[$\bullet$] The central elements $\alpha,\beta,\gamma$ act on $V_n(a,b,c)$ as scalar multiplication by
\begin{align*}
&(b+b^{-1})(c+c^{-1})+(a+a^{-1})(q^{n+1}+q^{-n-1}),
\\
&(c+c^{-1})(a+a^{-1})+(b+b^{-1})(q^{n+1}+q^{-n-1}),
\\
&(a+a^{-1})(b+b^{-1})+(c+c^{-1})(q^{n+1}+q^{-n-1}),
\end{align*}
respectively.
\end{enumerate}

By \cite[Theorem 4.4]{Huang:2015} the $\triangle_q$-module $V_n(a,b,c)$ is irreducible if and only if 
$$
abc, 
a^{-1}bc, 
ab^{-1}c, 
abc^{-1}
\notin 
\left\{
q^{2i-n-1}
\;\middle|\;
i=1,2,\dots,n
\right\}.
$$
A classification result \cite[Theorem 4.7]{Huang:2015} asserts that any finite-dimensional irreducible $\triangle_{q}$-module $V$ is isomorphic to $V_n(a,b,c)$ for some $n\in \N$ and $a,b,c\in \C^\times$, where $\dim V=n+1$ and the parameters $a,b,c$ satisfy the aforementioned irreducibility condition. 
For the effective determination of $a,b,c$ in practice, the following criterion can be employed, which naturally extends the classical method discussed in Section \ref{s:dec_classical}.

\begin{lem}
[\!\!{\cite[Corollary 4.10]{Huang:2015}}]
\label{thm:Vn(abc)_criterion}
Let $n\in \N$. Suppose that $V$ is an $(n+1)$-dimensional irreducible $\triangle_q$-module. For any $a,b,c\in \C^\times$, the $\triangle_q$-module $V_n(a,b,c)$ is isomorphic to $V$ if and only if the traces of $A,B,C$ on $V$ are equal to 
\begin{gather*}
(a+a^{-1}) \frac{q^{n+1}-q^{-n-1}}{q-q^{-1}},
\\
(b+b^{-1}) \frac{q^{n+1}-q^{-n-1}}{q-q^{-1}},
\\
(c+c^{-1}) \frac{q^{n+1}-q^{-n-1}}{q-q^{-1}},
\end{gather*}
respectively.
\end{lem}

To maintain consistency with the $q$-analogue realization in Theorem \ref{thm:UAW->Uq(so3)}, from this point forward, the notation $V_{n}(a,b,c)$ will refer to the corresponding $(n+1)$-dimensional $\triangle_{q^4}$-module. Under this convention, the parameter $q$ in the preceding formulas is replaced by $q^{4}$. 

Having established the classification of finite-dimensional irreducible $\triangle_q$-modules,  we now address the representation theory of $\Uq$. 
 Given that every finite-dimensional $\Uq$-module is completely reducible \cite{qso3:1999}, the investigation focuses on the decomposition of finite-dimensional irreducible $\Uq$-modules. Following the classification in \cite{qso3:1998, Hav2001}, these irreducible modules are categorized into two families: the classical and non-classical types. Accordingly, the remainder of this section is divided into two subsections to treat these cases separately.

\subsection{Decomposition of classical $\Uq$-modules over $\triangle_{q^4}$}

Let $n\in \N$ be given. According to \cite{qso3:1998,Hav2001} there exists a unique $(n+1)$-dimensional irreducible $\Uq$-module $L_n$ up to isomorphism satisfying the following conditions:
\begin{enumerate}
\item[$\bullet$] There exists a basis $\{u_i\}_{i=0}^n$ for $L_n$ satisfying 
\begin{align*}
(q^2-q^{-2})  I_1 u_i &=
\imi q (q^{n-2i} b_{n-i} u_{i-1}+ q^{2i-n} b_i u_{i+1})
\qquad (0\leq i\leq n),
\\
(q^2-q^{-2}) I_2 u_i &=
\imi \theta_i u_i
\qquad (0\leq i\leq n),
\\
(q^2-q^{-2})  I_3 u_i &=
b_{n-i} u_{i-1}-b_i u_{i+1}
\qquad (0\leq i\leq n),
\end{align*}
where $u_{-1}$ and $u_{n+1}$ are interpreted as the zero vector of $L_n$ and 
\begin{align*}
\theta_i &=q^{n-2i}-q^{2i-n}
\qquad (0\leq i\leq n),
\\
b_i&=
\frac{q^{2i+2}-q^{-2i-2}}{q^{n-2i}+q^{2i-n}}
\qquad (0\leq i\leq n).
\end{align*}

\item[$\bullet$] The Casimir element $\Lambda$ acts on $L_n$ as scalar multiplication by 
$$
-\frac{(q^n-q^{-n})(q^{n+2}-q^{-n-2})}{(q^2-q^{-2})^2}.
$$
\end{enumerate}
The $\Uq$-module $L_{n}$ is said to be of {\it classical} type, as it recovers the $(n+1)$-dimensional irreducible $\U$-module in the limit $q\to 1$.

\begin{lem}
\label{lem:UAWonLn}
For any $n\in \N$ the actions of $A,B,C$ on the $\triangle_{q^4}$-module $L_n$ are as follows:
\begin{align*}
A u_i &=
q^{2n-4i+4} \bar{b}_{n-i}
u_{i-2}
+ \bar{a}_i u_i 
+ q^{4i-2n+4} \bar{b}_i
u_{i+2}
\qquad (0\leq i\leq n),
\\
B u_i &=
\bar{\theta}_i u_i
\qquad (0\leq i\leq n),
\\
C u_i &=
 - \bar{b}_{n-i} u_{i-2}
 +\bar{a}_i u_i 
-\bar{b}_i u_{i+2} 
\qquad (0\leq i\leq n),
\end{align*}
where $u_{-2},u_{-1},u_{n+1},u_{n+2}$ are interpreted as the zero vector of $L_n$ and 
\begin{align*}
\bar{\theta}_i &= q^{2n-4i}+q^{4i-2n}
\qquad (0\leq i\leq n),
\\
\bar{a}_i&=\frac{(q^2+q^{-2})(q^n+q^{-n})(q^{n+2}+q^{-n-2})}
{(q^{n-2i-2}+q^{2i-n+2})(q^{n-2i+2}+q^{2i-n-2})}
\qquad (0\leq i\leq n),
\\
\bar{b}_i&=\frac{(q^{2i+2}-q^{-2i-2})(q^{2i+4}-q^{-2i-4})}
{(q^{n-2i}+q^{2i-n})(q^{n-2i-2}+q^{2i-n+2})}
\qquad (0\leq i\leq n).
\end{align*}
Moreover, the central elements $\alpha,\beta,\gamma$ act on $L_n$ as scalar multiplication by 
$$
(q^2+q^{-2})(q^n+q^{-n})(q^{n+2}+q^{-n-2}).
$$
\end{lem}
\begin{proof}
The formulas are obtained by applying the algebra homomorphism from Theorem \ref{thm:UAW->Uq(so3)} to the $\Uq$-module $L_n$. 
\end{proof}

To further elucidate the submodule structure, we partition the basis $\{u_{i}\}_{i=0}^{n}$ according to the parity of the indices. Let 
\begin{align*}
u_i^\even &=u_{2i} \qquad (0\leq i\leq \floor{\tfrac{n}{2}}),
\\
u_i^\odd &=u_{2i+1} \qquad (0\leq i\leq \floor{\tfrac{n-1}{2}}).
\end{align*}
Let $L_n^\even$ and $L_n^\odd$ denote the subspaces of $L_n$ spanned by $\{u_i^\even\}_{i=0}^{\floor{\frac{n}{2}}}$ and 
$\{u_i^\odd\}_{i=0}^{\floor{\frac{n-1}{2}}}$, respectively. 
By construction, the vector space $L_n$ admits the  decomposition:
$$
L_n=L_n^\even\oplus L_n^\odd.
$$
Note that $L_0^\odd$ is the zero space.
It follows from Lemma \ref{lem:UAWonLn} that $L_n^\even$ and $L_n^\odd$ are $\triangle_{q^4}$-submodules of $L_n$. Consequently, $L_n$ decomposes as the direct sum of $\triangle _{q^4}$-submodules $L_n^\even$ and $L_n^\odd$.

\begin{lem}
\label{lem:UAWonLn_even}
For any $n\in \N$ the actions of $A,B,C$ on $L_n^\even$ are as follows:
\begin{align*}
A u_i^\even &=
q^{2n-8i+4}  \bar{c}_i^{\even}
u_{i-1}^\even
+\bar{a}_i^\even u_i^\even
+ 
q^{8i-2n+4} \bar{b}_i^\even
u_{i+1}^\even
\qquad (0\leq i\leq \floor{\tfrac{n}{2}}),
\\
B u_i^\even &=\bar{\theta}_i^\even u_i^\even
\qquad (0\leq i\leq \floor{\tfrac{n}{2}}),
\\
C u_i^\even &=
- \bar{c}_i^{\even}  u_{i-1}^\even
+ \bar{a}_i^\even u_i^\even
- \bar{b}_i^\even u_{i+1}^\even
\qquad (0\leq i\leq \floor{\tfrac{n}{2}}),
\end{align*}
where $u_{-1}^\even=0$, $u_{\floor{\frac{n}{2}}+1}^\even=0$ and 
\begin{align}
\bar{\theta}_i^\even &=q^{2n-8i}+q^{8i-2n} 
\qquad (0\leq i\leq \floor{\tfrac{n}{2}}),
\label{thetae}
\\
\bar{a}_i^\even &=
\frac{(q^2+q^{-2})(q^n+q^{-n})(q^{n+2}+q^{-n-2})}
{(q^{n-4i-2}+q^{4i-n+2})(q^{n-4i+2}+q^{4i-n-2})}
\qquad (0\leq i\leq \floor{\tfrac{n}{2}}),
\label{ae}
\\
\bar{b}_i^\even &=\frac{(q^{4i+2}-q^{-4i-2})(q^{4i+4}-q^{-4i-4})}
{(q^{n-4i}+q^{4i-n})(q^{n-4i-2}+q^{4i-n+2})}
\qquad (0\leq i\leq \floor{\tfrac{n}{2}}),
\label{be}
\\
\bar{c}_i^{\even} &=\frac{(q^{2n-4i+2}-q^{4i-2n-2})(q^{2n-4i+4}-q^{4i-2n-4})}
{(q^{n-4i}+q^{4i-n})(q^{n-4i+2}+q^{4i-n-2})}
\qquad (0\leq i\leq \floor{\tfrac{n}{2}}).
\label{ce}
\end{align}
\end{lem}
\begin{proof}
The formulas follow from Lemma \ref{lem:UAWonLn} by restricting the indices to the even case. Specifically, the coefficients are identified as
\begin{gather*}
\bar{\theta}_i^\even = \bar{\theta}_{2i}, \quad 
\bar{a}_i^\even = \bar{a}_{2i}, \quad 
\bar{b}_i^\even = \bar{b}_{2i}, \quad 
\bar{c}_i^\even = \bar{b}_{n-2i}
\qquad 
(0\leq i\leq \floor{\tfrac{n}{2}}).
\end{gather*}
The evaluations \eqref{thetae}--\eqref{ce} are then obtained by direct substitution.
\end{proof}


\begin{lem}
\label{lem:parameterLn_even}
With the notation of Lemma \ref{lem:UAWonLn_even}, the following statements hold:
\begin{enumerate}
\item If $n$ is odd then the scalars $\{\bar{\theta}_i^\even \}_{i=0}^{\frac{n-1}{2}}$ are mutually distinct.

\item If $n$ is even then the scalars $\bar{\theta}_i^\even$ for all integers $i$ with $0\leq i\leq \frac{n}{4}$ are mutually distinct.

\item The scalars $\{\bar{b}_i^\even\}_{i=0}^{\floor{\frac{n}{2}}}$ and 
$\{\bar{c}_i^{\even}\}_{i=0}^{\floor{\frac{n}{2}}}$ are all nonzero.

\item If $n$ is even then
\begin{align*}
\bar{\theta}_i^\even =\bar{\theta}_{\frac{n}{2}-i}^\even,
\qquad
\bar{a}_i^\even = \bar{a}_{\frac{n}{2}-i}^\even, 
\qquad
\bar{b}_i^\even = \bar{c}_{\frac{n}{2}-i}^{\even}
\end{align*}
for all $i=0,1,\dots ,\frac{n}{2}$.

\item For each $i=0,1,\ldots,\floor{\frac{n}{2}}$ the scalar $\bar{a}_i^\even$ is equal to 
$$
\frac{(q^n+q^{-n})(q^{n+2}+q^{-n-2})}{2(q^2-q^{-2})}
\left(
\frac{q^{n-4i+2}-q^{4i-n-2}}{q^{n-4i+2}+q^{4i-n-2}}-
\frac{q^{n-4i-2}-q^{4i-n+2}}{q^{n-4i-2}+q^{4i-n+2}}
\right).
$$

\end{enumerate}
\end{lem}
\begin{proof}
(i), (ii): Since $q$ is not a root of unity, it follows from \eqref{thetae} that for any distinct $i, j \in \{0, 1, \ldots, \floor{\frac{n}{2}}\}$, the equality $\bar{\theta}_{i}^\even=\bar{\theta }_{j}^\even$ holds if and only if  $n=2(i+j)$. Statements (i) and (ii) follow immediately from this observation.

(iii): Under the assumption that $q$ is not a root of unity, the numerators of \eqref{be} and \eqref{ce} never vanish.

(iv): These identities are verified by direct substitution using the  formulas \eqref{thetae}--\eqref{ce}.

(v): 
This follows by applying the identity 
$$
\frac{2(q^{a-b}-q^{b-a})}{(q^a+q^{-a})(q^b+q^{-b})}=\frac{q^a-q^{-a}}{q^a+q^{-a}}-\frac{q^b-q^{-b}}{q^b+q^{-b}}
$$
to \eqref{ae} with $a=n-4i+2$ and $b=n-4i-2$.
\end{proof}

\begin{lem}
\label{lem:traceLn_even}
For any $n\in \N$ the following statements hold:
\begin{enumerate}
\item The trace of $B$ on $L_n^\even$ is equal to 
\begin{align*}
\begin{cases}
\displaystyle \frac{q^{2n+2}-q^{-2n-2}}{q^2-q^{-2}} 
\qquad &\hbox{if $n$ is odd},
\\
\displaystyle \frac{2(q^{2n+4}-q^{-2n-4})}{q^4-q^{-4}}
\qquad &\hbox{if $n$ is even}.
\end{cases}
\end{align*}

\item The traces of $A$ and $C$ on $L_n^\even$ are equal to 
\begin{align*}
\begin{cases}
\displaystyle \frac{q^{2n+2}-q^{-2n-2}}{q^2-q^{-2}} 
\qquad &\hbox{if $n$ is odd},
\\
\displaystyle \frac{q^{2n+2}-q^{-2n-2}}{q^2-q^{-2}} +1
\qquad &\hbox{if $n$ is even}.
\end{cases}
\end{align*}
\end{enumerate}
\end{lem}
\begin{proof}
(i): The trace of $B$ on $L_n^\even$ is equal to 
$
\sum_{i=0}^{\floor{n/2}}
\bar{\theta}_i^\even$. 
By the summation formula for geometric series, this sum evaluates to 
\begin{align*}
\sum_{i=0}^{\floor{\frac{n}{2}}}q^{2n-8i}+\sum_{i=0}^{\floor{\frac{n}{2}}} q^{8i-2n}
= 
q^{2n}\frac{q^{-8(\floor{n/2}+1)}-1}{q^{-8}-1} 
+ q^{-2n}\frac{ q^{8(\floor{n/2}+1)}-1}{q^8-1}.
\end{align*}
Statement (i) is obtained by simplifying the above expression according to the parity of $n$.

(ii): The traces of $A$ and $C$ on $L_n^\even$ are both equal to 
$
\sum_{i=0}^{\floor{n/2}}
\bar{a}_i^\even$. 
According to Lemma \ref{lem:parameterLn_even}(v), this sum is equal to $\frac{(q^n+q^{-n})(q^{n+2}+q^{-n-2})}{2(q^2-q^{-2})}$ multiplied by the telescoping sum 
\begin{align*}
\sum_{i=0}^{\floor{\frac{n}{2}}}
\frac{q^{n-4i+2}-q^{4i-n-2}}{q^{n-4i+2}+q^{4i-n-2}}-
\frac{q^{n-4i-2}-q^{4i-n+2}}{q^{n-4i-2}+q^{4i-n+2}}
=
\begin{cases}
\displaystyle \frac{q^{n+2}-q^{-n-2}}{q^{n+2}+q^{-n-2}}+\frac{q^{n}-q^{-n}}{q^n+q^{-n}}
\qquad &\hbox{if $n$ is odd},
\\
\displaystyle \frac{2(q^{n+2}-q^{-n-2})}{q^{n+2}+q^{-n-2}}
\qquad &\hbox{if $n$ is even}.
\end{cases}
\end{align*}
Simplifying the resulting expressions yields the values stated in (ii).
\end{proof}

The branching rules for $L_n^\even$ are summarized in the following theorem. These results are perfectly parallel to the decomposition patterns in Theorem \ref{thm:dec_Lne}.

\begin{thm}
\label{thm:decLn_even}
For any $n\in \N$ the following statements hold:
\begin{enumerate}
\item If $n$ is odd, then the $\triangle_{q^4}$-module $L_n^\even$ is isomorphic to the irreducible $\triangle_{q^4}$-module 
$
V_{\frac{n-1}{2}}(q^2,q^2,q^2)$.

\item If $n = 0$, then $L_n^\even$ is isomorphic to the irreducible $\triangle_{q^4}$-module $V_0 (1,1,1)$.

\item If $n \not= 0$ and $n \equiv 0 \pmod {4}$, then $L_n^\even$ is isomorphic to a direct sum of the irreducible $\triangle_{q^4}$-modules $V_{\frac{n}{4}-1}(q^n,q^{n+4},q^n)$ and $V_{\frac{n}{4}}(q^n,q^n,q^n)$.

\item  If $n \equiv 2 \pmod {4}$, then $L_n^\even$ is isomorphic to a direct sum of the irreducible $\triangle_{q^4}$-modules $V_{\frac{n-2}{4}}(q^{n-2},q^{n+2},q^{n+2})$ and $V_{\frac{n-2}{4}}(q^{n+2},q^{n+2},q^{n-2})$.
\end{enumerate}
\end{thm}
\begin{proof}
Throughout this proof, the parameters $\{\bar{\theta}_i^\even\}_{i=0}^{\floor{n/2}}$, $\{\bar{a}_i^\even\}_{i=0}^{\floor{n/2}}$, $\{\bar{b}_i^\even\}_{i=0}^{\floor{n/2}}$, $\{\bar{c}_i^\even\}_{i=0}^{\floor{n/2}}$ are given in \eqref{thetae}--\eqref{ce}.

(i): Suppose that $n$ is odd. 
Recall the actions of $A,B,C$ on $L_n^\even$ from Lemma \ref{lem:UAWonLn_even}. With respect to the ordered basis $\{u_i^\even\}_{i=0}^{\frac{n-1}{2}}$, the matrix representing $B$ is diagonal, while those representing $A$ and $C$ are tridiagonal. 
The irreducibility of $L_n^\even$ then follows from Lemma \ref{lem:parameterLn_even}(i) and (iii).
By Lemma \ref{lem:traceLn_even} the trace of each of $A,B,C$ on $L_n^\even$ is equal to
$$
\frac{q^{2n+2}-q^{-2n-2}}{q^2-q^{-2}}=
(q^2+q^{-2})
\frac{q^{2n+2}-q^{-2n-2}}{q^4-q^{-4}}.
$$
According to Lemma \ref{thm:Vn(abc)_criterion}, the irreducible $\triangle_{q^4}$-module $L_n^\even$ is isomorphic to $V_{\frac{n-1}{2}}(q^2,q^2,q^2)$.

(ii): Suppose that $n=0$. Since $L_n^\even$ is one-dimensional, it is an irreducible $\triangle_{q^4}$-module. 
By Lemma \ref{lem:traceLn_even} the traces of $A,B,C$ on $L_n^\even$ are equal to $2$. According to Lemma \ref{thm:Vn(abc)_criterion}, the irreducible $\triangle_{q^4}$-module $L_n^\even$ is isomorphic to $V_0(1,1,1)$.

(iii): Suppose that $n\not=0$ and $n\equiv 0\pmod{4}$. Let 
\begin{align*}
v_i&=u_i^\even-u_{\frac{n}{2}-i}^\even \qquad (0\leq i\leq  \tfrac{n}{4}-1),
\\
w_i&=u_i^\even+u_{\frac{n}{2}-i}^\even \qquad (0\leq i\leq   \tfrac{n}{4}).
\end{align*}
Observe that 
\begin{align*}
u_i^\even&= \frac{v_i+w_i}{2} \qquad (0\leq i\leq  \tfrac{n}{4}-1),
\\
u_{\frac{n}{2}-i}^\even&=\frac{w_i-v_i}{2} \qquad (0\leq i\leq  \tfrac{n}{4}-1),
\\
u_{\frac{n}{4}}&=\frac{1}{2}w_\frac{n}{4}.
\end{align*}
Since the vectors $\{u_i^\even\}_{i=0}^{\frac{n}{2}}$ are a basis for $L_n^\even$, this implies that $\{v_i\}_{i=0}^{\frac{n}{4}-1}$ and $\{w_i\}_{i=0}^{\frac{n}{4}}$ are a basis for $L_n^\even$. Let $V$ and $W$ denote the subspaces of $L_n^\even$ spanned by $\{v_i\}_{i=0}^{\frac{n}{4}-1}$ and $\{w_i\}_{i=0}^{\frac{n}{4}}$, respectively. Then 
$$
L_n^\even=V\oplus W.
$$

Recall the actions of $A,B,C$ on $L_n^\even$ from Lemma \ref{lem:UAWonLn_even}.
In view of Lemma \ref{lem:parameterLn_even}(iv), 
a direct calculation shows that $V$ and $W$ are two $\triangle_{q^4}$-submodules of $L_n^\even$. The actions of $A,B,C$ on $V$ are as follows:
\begin{align*}
A v_i&=
q^{2n-8i+4} \bar{c}_i^{\even} v_{i-1}
+\bar{a}_i^\even v_i
+q^{8i-2n+4} \bar{b}_i^\even  v_{i+1}
\qquad   (0\leq i\leq  \tfrac{n}{4}-1),
\\
B v_i&=
\bar{\theta}_i^\even v_i 
\qquad  (0\leq i\leq  \tfrac{n}{4}-1),
\\
C v_i&=
-\bar{c}_i^{\even} v_{i-1}
+\bar{a}_i^\even v_i
-\bar{b}_i^\even v_{i+1}
\qquad   (0\leq i\leq  \tfrac{n}{4}-1),
\end{align*}
where $v_{-1}=0$ and $v_{\frac{n}{4}}=0$. 
With respect to the ordered basis $\{v_i\}_{i=0}^{\frac{n}{4}-1}$, the matrix representing $B$ is diagonal, while those representing $A$ and $C$ are tridiagonal. 
The irreducibility of $V$ then follows from Lemma \ref{lem:parameterLn_even}(ii) and (iii). 
The trace of $B$ on $V$ is equal to 
$\sum_{i=0}^{\frac{n}{4}-1}\bar{\theta}_i^\even$. 
By \eqref{thetae} and Lemmas \ref{lem:parameterLn_even}(iv) and \ref{lem:traceLn_even}(i), this sum evaluates to
$$
\frac{1}{2}\left(\sum\limits_{i=0}^{\frac{n}{2}} \bar{\theta}_i^\even
-\bar{\theta}_{\frac{n}{4}}^\even \right)
=(q^{n+4}+q^{-n-4})\frac{q^n-q^{-n}}{q^4-q^{-4}}.
$$
The traces of $A$ and $C$ on $V$ are equal to 
$\sum_{i=0}^{\frac{n}{4}-1}\bar{a}_i^\even$.  
By \eqref{ae} and Lemmas \ref{lem:parameterLn_even}(iv) and \ref{lem:traceLn_even}(ii), this sum is equal to 
$$
\frac{1}{2}\left(\sum\limits_{i=0}^{\frac{n}{2}} \bar{a}_i^\even 
-\bar{a}_{\frac{n}{4}}^\even\right)
=(q^n+q^{-n})\frac{q^{n}-q^{-n}}{q^4-q^{-4}}.
$$
According to Lemma \ref{thm:Vn(abc)_criterion}, the irreducible $\triangle_{q^4}$-module $V$ is isomorphic to $V_{\frac{n}{4}-1}(q^n,q^{n+4},q^n)$.

The actions of $A,B,C$ on $W$ are as follows:
\begin{align*}
A w_i&=
q^{2n-8i+4}  \bar{c}_i^{\even} w_{i-1}
+\bar{a}_i^\even w_i
+q^{8i-2n+4} \bar{b}_i^\even w_{i+1}
\qquad  (0\leq i\leq  \tfrac{n}{4}-1),
\\
A w_{\frac{n}{4}}&=
2 q^4 \bar{c}_{\frac{n}{4}}^\even 
w_{\frac{n}{4}-1}
+\bar{a}_{\frac{n}{4}}^\even w_{\frac{n}{4}},
\\
B w_i&=\bar{\theta}_i^\even w_i \qquad  (0\leq i\leq \tfrac{n}{4}),
\\
C w_i&=-\bar{c}_i^{\even} w_{i-1}+\bar{a}_i^\even w_i-\bar{b}_i^\even w_{i+1}
\qquad   (0\leq i\leq \tfrac{n}{4}-1),
\\
C w_{\frac{n}{4}}&=
-2\bar{c}_{\frac{n}{4}}^{\even}
w_{\frac{n}{4}-1}
+\bar{a}_{\frac{n}{4}}^\even w_{\frac{n}{4}},
\end{align*}
where $w_{-1}=0$. 
With respect to the ordered basis $\{w_i\}_{i=0}^{\frac{n}{4}}$, the matrix representing $B$ is diagonal, while those representing $A$ and $C$ are tridiagonal. 
The irreducibility of $W$ then follows from Lemma \ref{lem:parameterLn_even}(ii) and (iii). 
The trace of $B$ on $W$ is equal to 
$\sum_{i=0}^{\frac{n}{4}}\bar{\theta}_i^\even$. 
By \eqref{thetae} and Lemmas \ref{lem:parameterLn_even}(iv) and \ref{lem:traceLn_even}(i), this sum evaluates to 
$$
\frac{1}{2}\left(\sum\limits_{i=0}^{\frac{n}{2}} \bar{\theta}_i^\even 
+\bar{\theta}_{\frac{n}{4}}^\even\right)
=(q^{n}+q^{-n})\frac{q^{n+4}-q^{-n-4}}{q^4-q^{-4}}.
$$
The traces of $A$ and $C$ on $W$ are equal to 
$\sum_{i=0}^{\frac{n}{4}}\bar{a}_i^\even$.  
By \eqref{ae} and Lemmas \ref{lem:parameterLn_even}(iv) and \ref{lem:traceLn_even}(ii), this sum is equal to 
$$
\frac{1}{2}\left(\sum\limits_{i=0}^{\frac{n}{2}} \bar{a}_i^\even 
+\bar{a}_{\frac{n}{4}}^\even\right)
=(q^{n}+q^{-n})\frac{q^{n+4}-q^{-n-4}}{q^4-q^{-4}}.
$$
According to Lemma \ref{thm:Vn(abc)_criterion},  the irreducible $\triangle_{q^4}$-module $W$ is isomorphic to $V_{\frac{n}{4}}(q^n,q^n,q^n)$.

(iv): Suppose that $n\equiv 2\pmod{4}$. Let 
\begin{align*}
v_i&=u_i^\even-u_{\frac{n}{2}-i}^\even \qquad (0\leq i\leq  \tfrac{n-2}{4}),
\\
w_i&=u_i^\even+u_{\frac{n}{2}-i}^\even \qquad (0\leq i\leq   \tfrac{n-2}{4}).
\end{align*}
Observe that 
\begin{align*}
u_i^\even&= \frac{v_i+w_i}{2} \qquad (0\leq i\leq  \tfrac{n-2}{4}),
\\
u_{\frac{n}{2}-i}^\even&=\frac{w_i-v_i}{2} \qquad (0\leq i\leq  \tfrac{n-2}{4}).
\end{align*}
Since the vectors $\{u_i^\even\}_{i=0}^{\frac{n}{2}}$ are a basis for $L_n^\even$, this implies that $\{v_i\}_{i=0}^{\frac{n-2}{4}}$ and $\{w_i\}_{i=0}^{\frac{n-2}{4}}$ are a basis for $L_n^\even$. Let $V$ and $W$ denote the subspaces of $L_n^\even$ spanned by $\{v_i\}_{i=0}^{\frac{n-2}{4}}$ and $\{w_i\}_{i=0}^{\frac{n-2}{4}}$, respectively.  Then 
$$
L_{n}^\even=V\oplus W.
$$

Recall the actions of $A,B,C$ on $L_n^\even$ from Lemma \ref{lem:UAWonLn_even}.
In view of Lemma \ref{lem:parameterLn_even}(iv)
a direct calculation shows that $V$ and $W$ are $\triangle_{q^4}$-submodules of $L_n^\even$. The actions of $A,B,C$ on $V$ are as follows:
\begin{align*}
A v_i&=
q^{2n-8i+4} \bar{c}_i^{\even} v_{i-1}
+\bar{a}_i^\even v_i
+q^{8i-2n+4} \bar{b}_i^\even v_{i+1}
\qquad   (0\leq i\leq \tfrac{n-6}{4}),
\\
A v_{\frac{n-2}{4}}&=
q^8 \bar{c}_{\frac{n-2}{4}}^{\even} v_{\frac{n-6}{4}}+
(\bar{a}_{\frac{n-2}{4}}^\even-\bar{b}_{\frac{n-2}{4}}^\even) v_\frac{n-2}{4}, 
\\
B v_i&=\bar{\theta}_i^\even v_i \qquad  (0\leq i\leq  \tfrac{n-2}{4}),
\\
C v_i&=-\bar{c}_i^{\even} v_{i-1}+\bar{a}_i^\even v_i-\bar{b}_i^\even v_{i+1}
\qquad   (0\leq i\leq \tfrac{n-6}{4}),
\\
C v_{\frac{n-2}{4}}&=
-\bar{c}_{\frac{n-2}{4}}^{\even} v_{\frac{n-6}{4}}+
(\bar{a}_{\frac{n-2}{4}}^\even+ \bar{b}_{\frac{n-2}{4}}^\even) v_{\frac{n-2}{4}},
\end{align*}
where $v_{-1}=0$. 
With respect to the ordered basis $\{v_i\}_{i=0}^{\frac{n-2}{4}}$, the matrix representing $B$ is diagonal, while those representing $A$ and $C$ are tridiagonal. 
The irreducibility of $V$ then follows from Lemma \ref{lem:parameterLn_even}(ii) and (iii). 
The trace of $A$ on $V$ is equal to 
$\sum_{i=0}^{\frac{n-2}{4}}\bar{a}_i^\even-\bar{b}_{\frac{n-2}{4}}^\even$.  
By \eqref{be} and Lemmas \ref{lem:parameterLn_even}(iv) and \ref{lem:traceLn_even}(ii), the sum is equal to 
$$
\frac{1}{2}
\sum\limits_{i=0}^{\frac{n}{2}} \bar{a}_i^\even 
-\bar{b}_{\frac{n-2}{4}}^\even
=(q^{n-2}+q^{2-n})\frac{q^{n+2}-q^{-n-2}}{q^4-q^{-4}}.
$$
The trace of $B$ on $V$ is equal to 
$\sum_{i=0}^{\frac{n-2}{4}}\bar{\theta}_i^\even$. 
By Lemmas \ref{lem:parameterLn_even}(iv) and \ref{lem:traceLn_even}(i), this sum evaluates to 
$$
\frac{1}{2}
\sum\limits_{i=0}^{\frac{n}{2}}  \bar{\theta}_i^\even 
=(q^{n+2}+q^{-n-2})\frac{q^{n+2}-q^{-n-2}}{q^4-q^{-4}}.
$$
The trace of $C$ on $V$ is equal to 
$\sum_{i=0}^{\frac{n-2}{4}}\bar{a}_i^\even+\bar{b}_{\frac{n-2}{4}}^\even$.  
By \eqref{be} and Lemmas \ref{lem:parameterLn_even}(iv) and  \ref{lem:traceLn_even}(ii), this sum is equal to 
$$
\frac{1}{2}
\sum\limits_{i=0}^{\frac{n}{2}} \bar{a}_i^\even 
+\bar{b}_{\frac{n-2}{4}}^\even
=(q^{n+2}+q^{-n-2})\frac{q^{n+2}-q^{-n-2}}{q^4-q^{-4}}.
$$
According to Lemma \ref{thm:Vn(abc)_criterion}, the irreducible $\triangle_{q^4}$-module $V$ is isomorphic to $V_{\frac{n-2}{4}}(q^{n-2},q^{n+2},q^{n+2})$.

The actions of $A,B,C$ on $W$ are as follows:
\begin{align*}
A w_i&=
q^{2n-8i+4} \bar{c}_i^{\even} w_{i-1}+
\bar{a}_i^\even w_i+
q^{8i-2n+4}\bar{b}_i^\even w_{i+1}
\qquad  (0\leq i\leq \tfrac{n-6}{4}),
\\
A w_{\frac{n-2}{4}} &=
q^8 \bar{c}_{\frac{n-2}{4}}^{\even} w_{\frac{n-6}{4}}
+(\bar{a}_{\frac{n-2}{4}}^\even+\bar{b}_{\frac{n-2}{4}}^\even) w_{\frac{n-2}{4}},
\\
B w_i&=\bar{\theta}_i^\even w_i \qquad  (0\leq i\leq \tfrac{n-2}{4}),
\\
C w_i&=-\bar{c}_i^{\even} w_{i-1}+\bar{a}_i^\even w_i-\bar{b}_i^\even w_{i+1}
\qquad   (0\leq i\leq \tfrac{n-6}{4}),
\\
C w_{\frac{n-2}{4}} &=
-\bar{c}_{\frac{n-2}{4}}^{\even} w_{\frac{n-6}{4}}
+(\bar{a}_{\frac{n-2}{4}}^\even-\bar{b}_{\frac{n-2}{4}}^\even) w_{\frac{n-2}{4}},
\end{align*}
where $w_{-1}=0$. 
With respect to the ordered basis $\{w_i\}_{i=0}^{\frac{n-2}{4}}$, the matrix representing $B$ is diagonal, while those representing $A$ and $C$ are tridiagonal. 
The irreducibility of $W$ then follows from Lemma \ref{lem:parameterLn_even}(ii) and (iii). 
The trace of $A$ on $W$ is equal to 
$\sum_{i=0}^{\frac{n-2}{4}}\bar{a}_i^\even+\bar{b}_{\frac{n-2}{4}}^\even$.  
By \eqref{be} and Lemmas \ref{lem:parameterLn_even}(iv) and \ref{lem:traceLn_even}(ii), this sum is equal to 
$$
\frac{1}{2}
\sum\limits_{i=0}^{\frac{n}{2}} \bar{a}_i^\even 
+\bar{b}_{\frac{n-2}{4}}^\even
=(q^{n+2}+q^{-n-2})\frac{q^{n+2}-q^{-n-2}}{q^4-q^{-4}}.
$$
The trace of $B$ on $W$ is equal to 
$\sum_{i=0}^{\frac{n-2}{4}}\bar{\theta}_i^\even$. 
By Lemmas \ref{lem:parameterLn_even}(iv) and \ref{lem:traceLn_even}(i), this sum evaluates to 
$$
\frac{1}{2}
\sum\limits_{i=0}^{\frac{n}{2}} \bar{\theta}_i^\even 
=(q^{n+2}+q^{-n-2})\frac{q^{n+2}-q^{-n-2}}{q^4-q^{-4}}.
$$
The trace of $C$ on $W$ is equal to 
$\sum_{i=0}^{\frac{n-2}{4}}\bar{a}_i^\even-\bar{b}_{\frac{n-2}{4}}^\even$.  
By \eqref{be} and Lemmas \ref{lem:parameterLn_even}(iv) and \ref{lem:traceLn_even}(ii), it is equal to 
$$
\frac{1}{2}\sum\limits_{i=0}^{\frac{n}{2}} \bar{a}_i^\even 
-\bar{b}_{\frac{n-2}{4}}^\even
=(q^{n-2}+q^{2-n})\frac{q^{n+2}-q^{-n-2}}{q^4-q^{-4}}.
$$
By Lemma \ref{thm:Vn(abc)_criterion}, the irreducible $\triangle_{q^4}$-module $W$ is isomorphic to $V_{\frac{n-2}{4}}(q^{n+2},q^{n+2},q^{n-2})$.
\end{proof}

\begin{lem}
\label{lem:UAWonLn_odd}
For any integer $n\geq 1$ the actions of $A,B,C$ on $L_n^\odd$ are as follows:
\begin{align*}
A u_i^\odd &=
q^{2n-8i}  \bar{c}_i^{\odd} u_{i-1}^\odd
+\bar{a}_i^\odd u_i^\odd
+ q^{8i-2n+8} \bar{b}_i^\odd u_{i+1}^\odd
\qquad (0\leq i\leq \floor{\tfrac{n-1}{2}}),
\\
B u_i^\odd &=\bar{\theta}_i^\odd u_i^\odd
\qquad (0\leq i\leq \floor{\tfrac{n-1}{2}}),
\\
C u_i^\odd &=
- \bar{c}_i^{\odd}  u_{i-1}^\odd
+\bar{a}_i^\odd u_i^\odd
- \bar{b}_i^\odd u_{i+1}^\odd
\qquad (0\leq i\leq \floor{\tfrac{n-1}{2}}),
\end{align*}
where $u_{-1}^\odd$ and $u_{\floor{\frac{n+1}{2}}}^\odd$ are interpreted as the zero vector of  $L_n^\odd$ and 
\begin{align}
\bar{\theta}_i^\odd &=q^{2n-8i-4}+q^{8i-2n+4} 
\qquad (0\leq i\leq \floor{\tfrac{n-1}{2}}),
\label{thetao}
\\
\bar{a}_i^\odd &=
\frac{(q^2+q^{-2})(q^n+q^{-n})(q^{n+2}+q^{-n-2})}
{(q^{n-4i-4}+q^{4i-n+4})(q^{n-4i}+q^{4i-n})}
\qquad (0\leq i\leq \floor{\tfrac{n-1}{2}}),
\label{ao}
\\
\bar{b}_i^\odd &=\frac{(q^{4i+4}-q^{-4i-4})(q^{4i+6}-q^{-4i-6})}
{(q^{n-4i-2}+q^{4i-n+2})(q^{n-4i-4}+q^{4i-n+4})}
\qquad (0\leq i\leq \floor{\tfrac{n-1}{2}}),
\label{bo}
\\
\bar{c}_i^{\odd} &=\frac{(q^{2n-4i}-q^{4i-2n})(q^{2n-4i+2}-q^{4i-2n-2})}
{(q^{4i-n+2}+q^{n-4i-2})(q^{4i-n}+q^{n-4i})}
\qquad (0\leq i\leq \floor{\tfrac{n-1}{2}}).
\label{co}
\end{align}
\end{lem}
\begin{proof}
The formulas follow from Lemma \ref{lem:UAWonLn} by restricting the indices to the odd case. Specifically, the coefficients are identified as
\begin{gather*}
\bar{\theta}_i^\odd = \bar{\theta}_{2i+1}, \quad 
\bar{a}_i^\odd = \bar{a}_{2i+1}, \quad 
\bar{b}_i^\odd = \bar{b}_{2i+1}, \quad 
\bar{c}_i^\odd = \bar{b}_{n-2i-1}
\qquad 
(0\leq i\leq \floor{\tfrac{n-1}{2}}).
\end{gather*}
 The evaluations \eqref{thetao}--\eqref{co} are then obtained by direct substitution.
\end{proof}


\begin{lem}
\label{lem:parameterLn_odd}
With the notation of Lemma \ref{lem:UAWonLn_odd}, the following statements hold:
\begin{enumerate}
\item If $n$ is odd then the scalars $\{\bar{\theta}_i^\odd \}_{i=0}^{\frac{n-1}{2}}$ are mutually distinct.

\item If $n$ is even then the scalars $\bar{\theta}_i^\odd$ for all integers $i$ with $0\leq i\leq \frac{n}{4}-1$ are mutually distinct.

\item The scalars 
$\{\bar{b}_i^\odd\}_{i=0}^{\floor{\frac{n-1}{2}}}$ 
and 
$\{\bar{c}_i^\odd\}_{i=0}^{\floor{\frac{n-1}{2}}}$ 
are all nonzero.

\item If $n$ is even then
\begin{align*}
\bar{\theta}_i^\odd =\bar{\theta}_{\frac{n}{2}-i-1}^\odd,
\qquad
\bar{a}_i^\odd = \bar{a}_{\frac{n}{2}-i-1}^\odd, 
\qquad
\bar{b}_i^\odd = \bar{c}_{\frac{n}{2}-i-1}^{\odd}
\end{align*}
for all $i=0,1,\dots ,\frac{n}{2}-1$.

\item For each $i=0,1,\ldots,\floor{\frac{n-1}{2}}$ the scalar $\bar{a}_i^\odd$ is equal to 
$$
\frac{(q^n+q^{-n})(q^{n+2}+q^{-n-2})}{2(q^2-q^{-2})}
\left(
\frac{q^{n-4i}-q^{4i-n}}{q^{n-4i}+q^{4i-n}}-
\frac{q^{n-4i-4}-q^{4i-n+4}}{q^{n-4i-4}+q^{4i-n+4}}
\right).
$$

\end{enumerate}
\end{lem}
\begin{proof}
(i), (ii): Since $q$ is not a root of unity, it follows from \eqref{thetao} that for any distinct $i, j \in \{0, 1, \ldots, \floor{\frac{n-1}{2}}\}$, the equality $\bar{\theta }_{i}^\odd=\bar{\theta }_{j}^\odd$ holds if and only if  $n=2(i+j+1)$. Statements (i) and (ii) follow immediately from this observation.

(iii): Since $q$ is not a root of unity, the numerators of \eqref{bo} and \eqref{co} never vanish.

(iv): These identities are verified by direct substitution using the  formulas \eqref{thetao}--\eqref{co}.

(v): This follows by applying the identity
$$
\frac{2(q^{a-b}-q^{b-a})}{(q^a+q^{-a})(q^b+q^{-b})}=\frac{q^a-q^{-a}}{q^a+q^{-a}}-\frac{q^b-q^{-b}}{q^b+q^{-b}}
$$
to  \eqref{ao}  with $a=n-4i$ and $b=n-4i-4$.
\end{proof}

\begin{lem}
\label{lem:traceLn_odd}
For any integer $n\geq 1$ the following statements hold:
\begin{enumerate}
\item The trace of $B$ on $L_n^\odd$ is equal to 
\begin{align*}
\begin{cases}
\displaystyle \frac{q^{2n+2}-q^{-2n-2}}{q^2-q^{-2}} 
\qquad &\hbox{if $n$ is odd},
\\
\displaystyle 2\cdot \frac{q^{2n}-q^{-2n}}{q^4-q^{-4}}
\qquad &\hbox{if $n$ is even}.
\end{cases}
\end{align*}

\item The traces of $A$ and $C$ on $L_n^\odd$ are equal to 
\begin{align*}
\begin{cases}
\displaystyle \frac{q^{2n+2}-q^{-2n-2}}{q^2-q^{-2}} 
\qquad &\hbox{if $n$ is odd},
\\
\displaystyle \frac{(q^{n+2}+q^{-n-2})(q^{n}-q^{-n})}{q^2-q^{-2}}
\qquad &\hbox{if $n$ is even}.
\end{cases}
\end{align*}

\end{enumerate}
\end{lem}
\begin{proof}
(i): The trace of $B$ on $L_n^\odd$ is equal to 
$
\sum_{i=0}^{\floor{(n-1)/2}}
\bar{\theta}_i^\odd$. 
By the summation formula for geometric series, this sum evaluates to 
\begin{align*}
\sum_{i=0}^{\floor{\frac{n-1}{2}}}q^{2n-8i-4}+\sum_{i=0}^{\floor{\frac{n-1}{2}}} q^{8i-2n+4}
=
q^{2n-4}\frac{q^{-8\floor{(n+1)/2}}-1}{q^{-8}-1}
+q^{4-2n}\frac{q^{8\floor{(n+1)/2}}-1}{q^{8}-1}.
\end{align*}
Statement (i) is obtained by simplifying the above expression according to the parity of $n$.

(ii): The traces of $A$ and $C$ on $L_n^\odd$ are both equal to 
$
\sum_{i=0}^{\floor{(n-1)/2}}
\bar{a}_i^\odd$. 
According to Lemma \ref{lem:parameterLn_odd}(v), this sum is equal to $\frac{(q^n+q^{-n})(q^{n+2}+q^{-n-2})}{2(q^2-q^{-2})}$ multiplied by the telescoping sum 
\begin{align*}
\sum_{i=0}^{\floor{\frac{n-1}{2}}}
\frac{q^{n-4i}-q^{4i-n}}{q^{n-4i}+q^{4i-n}}-
\frac{q^{n-4i-4}-q^{4i-n+4}}{q^{n-4i-4}+q^{4i-n+4}}
=
\begin{cases}
\displaystyle \frac{q^{n}-q^{-n}}{q^{n}+q^{-n}}+\frac{q^{n+2}-q^{-n-2}}{q^{n+2}+q^{-n-2}}
\qquad &\hbox{if $n$ is odd},
\\
\displaystyle 2\cdot \frac{q^{n}-q^{-n}}{q^{n}+q^{-n}}
\qquad &\hbox{if $n$ is even}.
\end{cases}
\end{align*}
Simplifying the resulting expressions yields the values stated in (ii).
\end{proof}

The branching rules for $L_n^\odd$ are summarized in the following theorem. These results are perfectly parallel to the decomposition patterns in Theorem \ref{thm:dec_Lno}.

\begin{thm}
\label{thm:decLn_odd}
For any integer $n\geq 1$ the following statements hold:
\begin{enumerate}
\item If $n$ is odd, then the $\triangle_{q^4}$-module $L_n^\odd$ is isomorphic to the irreducible $\triangle_{q^4}$-module 
$
V_{\frac{n-1}{2}}(q^2,q^2,q^2)$.

\item If $n = 2$, then $L_n^\odd$ is isomorphic to the irreducible $\triangle_{q^4}$-module $V_0(q^4,1,q^4)$.

\item If $n \not= 2$ and $n \equiv 2 \pmod {4}$, then $L_n^\odd$ is isomorphic to a direct sum of the irreducible $\triangle_{q^4}$-modules 
$V_{\frac{n-6}{4}}(q^{n+2},q^{n+2},q^{n+2})$ and 
$V_{\frac{n-2}{4}}(q^{n+2},q^{n-2},q^{n+2})$.

\item  If $n \equiv 0 \pmod {4}$, then $L_n^\odd$ is isomorphic to a direct sum of the irreducible $\triangle_{q^4}$-modules $V_{\frac{n}{4}-1} (q^{n},q^{n},q^{n+4})$ and $V_{\frac{n}{4}-1}(q^{n+4},q^{n},q^{n})$.
\end{enumerate}
\end{thm}
\begin{proof}
Throughout this proof, the parameters $\{\bar{\theta}_i^\odd\}_{i=0}^{\floor{(n-1)/2}}$, $\{\bar{a}_i^\odd\}_{i=0}^{\floor{(n-1)/2}}$, $\{\bar{b}_i^\odd\}_{i=0}^{\floor{(n-1)/2}}$, $\{\bar{c}_i^\odd\}_{i=0}^{\floor{(n-1)/2}}$ are given in \eqref{thetao}--\eqref{co}.

(i): Suppose that $n$ is odd. 
Recall the actions of $A,B,C$ on $L_n^\odd$ from Lemma \ref{lem:UAWonLn_odd}.
With respect to the ordered basis $\{u_i^\odd\}_{i=0}^{\frac{n-1}{2}}$, the matrix representing $B$ is diagonal, while those representing $A$ and $C$ are tridiagonal. 
The irreducibility of $L_n^\odd$ then follows from Lemma \ref{lem:parameterLn_odd}(i) and (iii). 
By Lemma \ref{lem:traceLn_odd}, the trace of each of $A,B,C$ on $L_n^\odd$ is equal to
$$
\frac{q^{2n+2}-q^{-2n-2}}{q^2-q^{-2}}=
(q^2+q^{-2})\frac{q^{2n+2}-q^{-2n-2}}{q^4-q^{-4}}.
$$
According to Lemma \ref{thm:Vn(abc)_criterion},  the irreducible $\triangle_{q^4}$-module $L_n^\odd$ is isomorphic to $V_{\frac{n-1}{2}}(q^2,q^2,q^2)$.

(ii): Suppose that $n=2$. Since $L_n^\odd$ is one-dimensional, it is an irreducible $\triangle_{q^4}$-module. 
By Lemma \ref{lem:traceLn_odd}(i) the trace of $B$ on $L_n^\odd$ is equal to $2$. By Lemma \ref{lem:traceLn_odd}(ii) the traces of $A$ and $C$ on $L_n^\odd$ are equal to $q^4+q^{-4}$. 
According to Lemma \ref{thm:Vn(abc)_criterion}, the irreducible $\triangle_{q^4}$-module $L_n^\odd$ is isomorphic to $V_0(q^4,1,q^4)$.

(iii): Suppose that $n\not=2$ and $n\equiv 2\pmod{4}$. Let 
\begin{align*}
v_i&=u_i^\odd-u_{\frac{n}{2}-i-1}^\odd \qquad (0\leq i\leq  \tfrac{n-6}{4}),
\\
w_i&=u_i^\odd+u_{\frac{n}{2}-i-1}^\odd \qquad (0\leq i\leq \tfrac{n-2}{4}).
\end{align*}
Observe that 
\begin{align*}
u_i^\odd&= \frac{v_i+w_i}{2} \qquad (0\leq i\leq  \tfrac{n-6}{4}),
\\
u_{\frac{n}{2}-i-1}^\odd&=\frac{w_i-v_i}{2} \qquad (0\leq i\leq \tfrac{n-6}{4}),
\\
u_{\frac{n-2}{4}}&=\frac{1}{2}w_\frac{n-2}{4}.
\end{align*}
Since the vectors $\{u_i^\odd\}_{i=0}^{\frac{n}{2}-1}$ are a basis for $L_n^\odd$, this implies that $\{v_i\}_{i=0}^{\frac{n-6}{4}}$ and $\{w_i\}_{i=0}^{\frac{n-2}{4}}$ are a basis for $L_n^\odd$. Let $V$ and $W$ denote the subspaces of $L_n^\odd$ spanned by $\{v_i\}_{i=0}^{\frac{n-6}{4}}$ and $\{w_i\}_{i=0}^{\frac{n-2}{4}}$, respectively. Then 
$$
L_n^\odd=V\oplus W.
$$

Recall the actions of $A,B,C$ on $L_n^\odd$ from Lemma \ref{lem:UAWonLn_odd}.
In view of Lemma \ref{lem:parameterLn_odd}(iv), 
a direct calculation shows that $V$ and $W$ are $\triangle_{q^4}$-submodules of $L_n^\odd$. The actions of $A,B,C$ on $V$ are as follows:
\begin{align*}
A v_i&=
q^{2n-8i} \bar{c}_i^{\odd} v_{i-1}+
\bar{a}_i^\odd v_i+
q^{8i-2n+8} \bar{b}_i^\odd v_{i+1}
\qquad   (0\leq i\leq  \tfrac{n-6}{4}),
\\
B v_i&=
\bar{\theta}_i^\odd v_i 
\qquad  (0\leq i\leq  \tfrac{n-6}{4}),
\\
C v_i&=
-\bar{c}_i^{\odd} v_{i-1}+
\bar{a}_i^\odd v_i
-\bar{b}_i^\odd v_{i+1}
\qquad   (0\leq i\leq \tfrac{n-6}{4}),
\end{align*}
where $v_{-1}=0$ and $v_{\frac{n-2}{4}}=0$. 
With respect to the ordered basis $\{v_i\}_{i=0}^{\frac{n-6}{4}}$, the matrix representing $B$ is diagonal, while those representing $A$ and $C$ are tridiagonal. 
The irreducibility of $V$ then follows from Lemma \ref{lem:parameterLn_odd}(ii) and (iii). 
The trace of $B$ on $V$ is equal to 
$\sum_{i=0}^{(n-6)/4}\bar{\theta}_i^\odd$. 
By \eqref{thetao} and Lemmas \ref{lem:parameterLn_odd}(iv) and \ref{lem:traceLn_odd}(i), this sum evaluates to
$$
\frac{1}{2}\left(\sum\limits_{i=0}^{\frac{n}{2}-1} \bar{\theta}_i^\odd
-\bar{\theta}_{\frac{n-2}{4}}^\odd \right)
=(q^{n+2}+q^{-n-2})\frac{q^{n-2}-q^{2-n}}{q^4-q^{-4}}.
$$
The traces of $A$ and $C$ on $V$ are equal to 
$\sum_{i=0}^{(n-6)/4}\bar{a}_i^\odd$.  
By \eqref{ao} and Lemmas \ref{lem:parameterLn_odd}(iv) and \ref{lem:traceLn_odd}(ii), this sum is equal to 
$$
\frac{1}{2}\left(\sum\limits_{i=0}^{\frac{n}{2}-1} \bar{a}_i^\odd
-\bar{a}_{\frac{n-2}{4}}^\odd \right)
=(q^{n+2}+q^{-n-2})\frac{q^{n-2}-q^{2-n}}{q^4-q^{-4}}.
$$
According to Lemma \ref{thm:Vn(abc)_criterion}, the irreducible $\triangle_{q^4}$-module $V$ is isomorphic to $V_{\frac{n-6}{4}}(q^{n+2},q^{n+2},q^{n+2})$.

The actions of $A,B,C$ on $W$ are as follows:
\begin{align*}
A w_i&=
q^{2n-8i} \bar{c}_i^{\odd} w_{i-1}
+\bar{a}_i^\odd w_i
+q^{8i-2n+8} \bar{b}_i^\odd w_{i+1}
\qquad  (0\leq i\leq  \tfrac{n-6}{4}),
\\
A w_{\frac{n-2}{4}}&=
2q^4 \bar{c}_{\frac{n-2}{4}}^\odd w_{\frac{n-6}{4}}
+\bar{a}_{\frac{n-2}{4}}^\odd w_{\frac{n-2}{4}},
\\
B w_i&=\bar{\theta}_i^\odd w_i \qquad  (0\leq i\leq  \tfrac{n-2}{4}),
\\
C w_i&=-\bar{c}_i^{\odd} w_{i-1}+\bar{a}_i^\odd w_i-\bar{b}_i^\odd w_{i+1}
\qquad   (0\leq i\leq  \tfrac{n-6}{4}),
\\
C w_{\frac{n-2}{4}}&=
-2\bar{c}_{\frac{n-2}{4}}^\odd w_{\frac{n-6}{4}}
+\bar{a}_{\frac{n-2}{4}}^\odd w_{\frac{n-2}{4}}
\end{align*}
where $w_{-1}=0$. With respect to the ordered basis $\{w_i\}_{i=0}^{\frac{n-2}{4}}$, the matrix representing $B$ is diagonal, while those representing $A$ and $C$ are tridiagonal. 
The irreducibility of $W$ then follows from Lemma \ref{lem:parameterLn_odd}(ii) and (iii). 
The trace of $B$ on $W$ is equal to 
$\sum_{i=0}^{\frac{n-2}{4}}\bar{\theta}_i^\odd$. 
By \eqref{thetao} and Lemmas \ref{lem:parameterLn_odd}(iv) and \ref{lem:traceLn_odd}(i), this sum evaluates to 
$$
\frac{1}{2}\left(\sum\limits_{i=0}^{\frac{n}{2}-1} \bar{\theta}_i^\odd 
+\bar{\theta}_{\frac{n-2}{4}}^\odd\right)
=(q^{n-2}+q^{2-n})\frac{q^{n+2}-q^{2-n}}{q^4-q^{-4}}.
$$
The traces of $A$ and $C$ on $W$ are equal to 
$\sum_{i=0}^{\frac{n-2}{4}}\bar{a}_i^\odd$.  
By \eqref{ao} and Lemmas \ref{lem:parameterLn_odd}(iv) and \ref{lem:traceLn_even}(ii), this sum is equal to 
$$
\frac{1}{2}\left(\sum\limits_{i=0}^{\frac{n}{2}-1} \bar{a}_i^\odd 
+\bar{a}_{\frac{n}{4}}^\odd\right)
=(q^{n+2}+q^{-n-2})\frac{q^{n+2}-q^{-n-2}}{q^4-q^{-4}}.
$$
According to Lemma \ref{thm:Vn(abc)_criterion},  the irreducible $\triangle_{q^4}$-module $W$ is isomorphic to $V_{\frac{n-2}{4}}(q^{n+2},q^{n-2},q^{n+2})$.

(iv): Suppose that $n\equiv 0\pmod{4}$. Let 
\begin{align*}
v_i&=u_i^\odd-u_{\frac{n}{2}-i-1}^\odd \qquad (0\leq i\leq  \tfrac{n}{4}-1),
\\
w_i&=u_i^\odd+u_{\frac{n}{2}-i-1}^\odd \qquad (0\leq i\leq    \tfrac{n}{4}-1).
\end{align*}
Observe that 
\begin{align*}
u_i^\odd&= \frac{v_i+w_i}{2} \qquad (0\leq i\leq \tfrac{n}{4}-1),
\\
u_{\frac{n}{2}-i-1}^\odd&=\frac{w_i-v_i}{2} \qquad (0\leq i\leq \tfrac{n}{4}-1).
\end{align*}
Since the vectors $\{u_i^\odd\}_{i=0}^{\frac{n}{2}-1}$ are a basis for $L_n^\odd$, this implies that $\{v_i\}_{i=0}^{\frac{n}{4}-1}$ and $\{w_i\}_{i=0}^{\frac{n}{4}-1}$ are a basis for $L_n^\odd$. Let $V$ and $W$ denote the subspaces of $L_n^\odd$ spanned by $\{v_i\}_{i=0}^{\frac{n}{4}-1}$ and $\{w_i\}_{i=0}^{\frac{n}{4}-1}$, respectively.  Then 
$$
L_{n}^\odd=V\oplus W.
$$

Recall the actions of $A,B,C$ on $L_n^\odd$ from Lemma \ref{lem:UAWonLn_odd}.
In view of Lemma \ref{lem:parameterLn_odd}(iv)
a direct calculation shows that $V$ and $W$ are $\triangle_{q^4}$-submodules of $L_n^\odd$. The actions of $A,B,C$ on $V$ are as follows:
\begin{align*}
A v_i&=
q^{2n-8i} \bar{c}_i^{\odd} v_{i-1}
+\bar{a}_i^\odd v_i
+q^{8i-2n+8} \bar{b}_i^\odd v_{i+1}
\qquad   (0\leq i\leq  \tfrac{n}{4}-2),
\\
A v_{\frac{n}{4}-1}&=
q^8 \bar{c}_{\frac{n}{4}-1}^{\odd} v_{\frac{n}{4}-2}+
(\bar{a}_{\frac{n}{4}-1}^\odd-\bar{b}_{\frac{n}{4}-1}^\odd) v_{\frac{n}{4}-1}, 
\\
B v_i&=
\bar{\theta}_i^\odd v_i 
\qquad  (0\leq i\leq  \tfrac{n}{4}-1),
\\
C v_i&=
-\bar{c}_i^{\odd} v_{i-1}
+\bar{a}_i^\odd v_i
-\bar{b}_i^\odd v_{i+1}
\qquad   (0\leq i\leq  \tfrac{n}{4}-2),
\\
C v_{\frac{n}{4}-1}&=
-\bar{c}_{\frac{n}{4}-1}^{\odd} v_{\frac{n}{4}-2}+
(\bar{a}_{\frac{n}{4}-1}^\odd+ \bar{b}_{\frac{n}{4}-1}^\odd ) v_{\frac{n}{4}-1},
\end{align*}
where $v_{-1}=0$. 
With respect to the ordered basis $\{v_i\}_{i=0}^{\frac{n}{4}-1}$, the matrix representing $B$ is diagonal, while those representing $A$ and $C$ are tridiagonal. 
The irreducibility of $V$ then follows from Lemma \ref{lem:parameterLn_odd}(ii) and (iii). 
The trace of $A$ on $V$ is equal to 
$\sum_{i=0}^{\frac{n}{4}-1}\bar{a}_i^\odd-\bar{b}_{\frac{n}{4}-1}^\odd$.  
By \eqref{bo} and Lemmas \ref{lem:parameterLn_odd}(iv) and \ref{lem:traceLn_odd}(ii), the sum is equal to 
$$
\frac{1}{2}
\sum\limits_{i=0}^{\frac{n}{2}-1} \bar{a}_i^\odd 
-\bar{b}_{\frac{n}{4}-1}^\odd
=(q^{n}+q^{-n})\frac{q^{n}-q^{-n}}{q^4-q^{-4}}.
$$
The trace of $B$ on $V$ is equal to 
$\sum_{i=0}^{\frac{n}{4}-1}\bar{\theta}_i^\odd$. 
By Lemmas \ref{lem:parameterLn_odd}(iv) and \ref{lem:traceLn_odd}(i), this sum evaluates to 
$$
\frac{1}{2}
\sum\limits_{i=0}^{\frac{n}{2}-1} \bar{\theta}_i^\odd 
=(q^{n}+q^{-n})\frac{q^{n}-q^{-n}}{q^4-q^{-4}}.
$$
The trace of $C$ on $V$ is equal to 
$\sum_{i=0}^{\frac{n}{4}-1}\bar{a}_i^\odd+\bar{b}_{\frac{n}{4}-1}^\odd$.  
By \eqref{bo} and Lemmas \ref{lem:parameterLn_odd}(iv) and \ref{lem:traceLn_odd}(ii), this sum is equal to 
$$
\frac{1}{2}
\sum\limits_{i=0}^{\frac{n}{2}-1} \bar{a}_i^\odd 
+\bar{b}_{\frac{n}{4}-1}^\odd
=(q^{n+4}+q^{-n-4})\frac{q^{n}-q^{-n}}{q^4-q^{-4}}.
$$
According to Lemma \ref{thm:Vn(abc)_criterion}, the irreducible $\triangle_{q^4}$-module $V$ is isomorphic to $V_{\frac{n}{4}-1}(q^{n},q^{n},q^{n+4})$.

The actions of $A,B,C$ on $W$ are as follows:
\begin{align*}
A w_i&=
q^{2n-8i}\bar{c}_i^{\odd} w_{i-1}
+\bar{a}_i^\odd w_i
+q^{8i-2n+8}\bar{b}_i^\odd w_{i+1}
\qquad  (0\leq i\leq  \tfrac{n}{4}-2),
\\
A w_{\frac{n}{4}-1} &=
q^8 \bar{c}_{\frac{n}{4}-1}^{\odd} w_{\frac{n}{4}-2}
+(\bar{a}_{\frac{n}{4}-1}^\odd+\bar{b}_{\frac{n}{4}-1}^\odd) w_{\frac{n}{4}-1},
\\
B w_i&=\bar{\theta}_i^\odd w_i \qquad  (0\leq i\leq  \tfrac{n}{4}-1),
\\
C w_i&=
-\bar{c}_i^{\odd} w_{i-1}
+\bar{a}_i^\odd w_i
-\bar{b}_i^\odd w_{i+1}
\qquad   (0\leq i\leq \tfrac{n}{4}-2),
\\
C w_{\frac{n}{4}-1} &=
-\bar{c}_{\frac{n}{4}-1}^{\odd} w_{\frac{n}{4}-2}
+(\bar{a}_{\frac{n}{4}-1}^\odd-\bar{b}_{\frac{n}{4}-1}^\odd) w_{\frac{n}{4}-1},
\end{align*}
where $w_{-1}=0$. With respect to the ordered basis $\{w_i\}_{i=0}^{\frac{n}{4}-2}$, the matrix representing $B$ is diagonal, while those representing $A$ and $C$ are tridiagonal. 
The irreducibility of $W$ then follows from Lemma \ref{lem:parameterLn_odd}(ii) and (iii). 
The trace of $A$ on $W$ is equal to 
$\sum_{i=0}^{\frac{n}{4}-1}\bar{a}_i^\odd+\bar{b}_{\frac{n}{4}-1}^\odd$.  
By \eqref{bo} and Lemmas \ref{lem:parameterLn_odd}(iv) and \ref{lem:traceLn_odd}(ii), this sum is equal to 
$$
\frac{1}{2}
\sum\limits_{i=0}^{\frac{n}{2}-1} \bar{a}_i^\odd 
+\bar{b}_{\frac{n}{4}-1}^\odd
=(q^{n+4}+q^{-n-4})\frac{q^{n}-q^{-n}}{q^4-q^{-4}}.
$$
The trace of $B$ on $W$ is equal to 
$\sum_{i=0}^{\frac{n}{4}-1}\bar{\theta}_i^\odd$. 
By Lemmas \ref{lem:parameterLn_odd}(iv) and  \ref{lem:traceLn_odd}(i), this sum evaluates to 
$$
\frac{1}{2}
\sum\limits_{i=0}^{\frac{n}{2}-1} \bar{\theta}_i^\odd 
=(q^{n}+q^{-n})\frac{q^{n}-q^{-n}}{q^4-q^{-4}}.
$$
The trace of $C$ on $W$ is equal to 
$\sum_{i=0}^{\frac{n}{4}-1}\bar{a}_i^\odd-\bar{b}_{\frac{n}{4}-1}^\odd$.  
By \eqref{bo} and Lemmas \ref{lem:parameterLn_odd}(iv) and  \ref{lem:traceLn_odd}(ii), it is equal to 
$$
\frac{1}{2}\sum\limits_{i=0}^{\frac{n}{2}-1} \bar{a}_i^\odd 
-\bar{b}_{\frac{n}{4}-1}^\odd
=(q^{n}+q^{-n})\frac{q^{n}-q^{-n}}{q^4-q^{-4}}.
$$
By Lemma \ref{thm:Vn(abc)_criterion},  the irreducible $\triangle_{q^4}$-module $W$ is isomorphic to $V_{\frac{n}{4}-1}(q^{n+4},q^{n},q^{n})$.
\end{proof}

Theorems \ref{thm:decLn_even} and \ref{thm:decLn_odd} reveal a remarkable structural rigidity: the decomposition of finite-dimensional irreducible $\Uq$-modules of classical type into $\triangle _{q^4}$-modules remains perfectly consistent with the classical branching rules of $\U$ over $\Re$ established in Theorems \ref{thm:dec_Lne} and \ref{thm:dec_Lno}. This correspondence confirms that Theorem \ref{thm:UAW->Uq(so3)} is indeed the canonical $q$-analogue of Theorem \ref{thm:R->U(so3)}. Furthermore, the preservation of these decomposition patterns under $q$-deformation underscores the intrinsic nature of the cyclic symmetry shared by the universal Askey--Wilson algebra and the nonstandard quantum algebra $\Uq$. Having fully characterized the classical type, we now proceed to investigate the branching rules for the irreducible $\Uq$-modules of non-classical type.

\subsection{Decomposition of non-classical $\Uq$-modules over $\triangle_{q^4}$}\label{sec:nonclassical}

Let $n\in \N$ and $\e,\e'\in \{\pm 1\}$ be given. According to \cite{qso3:1998,Hav2001} there exists a unique $(n+1)$-dimensional irreducible $\Uq$-module $R_n(\e,\e')$ up to isomorphism satisfying the following conditions:
\begin{enumerate}

\item[$\bullet$] There exists a basis $\{u_i\}_{i=0}^n$ for $R_n(\e,\e')$ satisfying 
\begin{align*}
(q^2-q^{-2})  I_1 u_0 &=
-\imi \e' c_0 u_0
+ q^2 b_0 u_1,
\\
(q^2-q^{-2})  I_1 u_i &=
 q^{-2i} c_i u_{i-1}
+ q^{2i+2}b_i u_{i+1}
\qquad (1\leq i\leq n),
\\
\e(q^2-q^{-2}) I_2 u_i &=
 \theta_i u_i
\qquad (0\leq i\leq n),
\\
\e(q^2-q^{-2})  I_3 u_0 &=
-\imi \e' c_0 u_0
+b_0 u_1,
\\
\e (q^2-q^{-2})  I_3 u_i &=
 c_i u_{i-1}
+b_i u_{i+1}
\qquad (1\leq i\leq n),
\end{align*}
where $u_{n+1}$ is interpreted as the zero vector of $R_n(\e,\e')$ and 
\begin{align*}
\theta_i &=q^{2i+1}+q^{-2i-1}
\qquad (0\leq i\leq n),
\\
b_i&=
\imi \frac{q^{2n-2i}-q^{2i-2n}}{q^{2i+1}-q^{-2i-1}}
\qquad (0\leq i\leq n),
\\
c_i&=
\imi \frac{q^{2n+2i+2}-q^{-2n-2i-2}}{q^{2i+1}-q^{-2i-1}}
\qquad (0\leq i\leq n).
\end{align*}

\item[$\bullet$] The Casimir element $\Lambda$ acts on $R_n(\e,\e')$ as scalar multiplication by 
$$
\frac{(q^{2n+1}+q^{-2n-1})(q^{2n+3}+q^{-2n-3})}{(q^2-q^{-2})^2}.
$$
\end{enumerate}

The $\Uq$-module $R_n(\e,\e')$  is said to be of {\it non-classical} type.

\begin{lem}
\label{lem:UAWonRn}
For any $n\in \N$ and any $\e,\e'\in \{\pm 1\}$, the actions of $A,B,C$ on $R_n(\e,\e')$ are as follows:
\begin{align*}
A u_0 &=
\bar{a}_0 u_0
+\imi \e' q^2 \bar{c}_0 u_1
+q^6 \bar{b}_0 u_2,
\\
A u_1 &=
-\imi \e' q^{-2} \bar{c}_1 u_0
+ \bar{a}_1 u_1
+q^{10} \bar{b}_1 u_3
\qquad 
(n\geq 1),
\\
A u_i &=
q^{2-4i} \bar{c}_i u_{i-2}
+\bar{a}_i u_i
+q^{4i+6}\bar{b}_i u_{i+2}
\qquad (2\leq i\leq n),
\\
B u_i &=
-\bar{\theta}_i u_i 
\qquad (0\leq i\leq n),
\\
C u_0 &=
\bar{a}_0 u_0
+\imi \e' \bar{c}_0 u_1
+\bar{b}_0 u_2,
\\
C u_1 &=
-\imi \e' \bar{c}_1 u_0
+ \bar{a}_1 u_1
+\bar{b}_1 u_3
\qquad 
(n\geq 1),
\\
C u_i &=
\bar{c}_i u_{i-2}
+\bar{a}_i u_i
+\bar{b}_i u_{i+2}
\qquad (2\leq i\leq n),
\end{align*}
where $u_{n+1}$ and $u_{n+2}$ are interpreted as the zero vector of $R_n(\e,\e')$, and the coefficients are given by 
\begin{align}
\bar{\theta}_i &=q^{4i+2}+q^{-4i-2}
\qquad (0\leq i\leq n),
\label{thetaR}
\\
\bar{a}_i &=
\frac{(q^2+q^{-2})(q^{2n+1}-q^{-2n-1})(q^{2n+3}-q^{-2n-3})}{(q^{2i+3}-q^{-2i-3})(q^{2i-1}-q^{1-2i})}
\qquad (0\leq i\leq n),
\label{aR}
\\
\bar{b}_i &=\frac{(q^{2n-2i-2}-q^{2i-2n+2})(q^{2n-2i}-q^{2i-2n})}
{(q^{2i+1}-q^{-2i-1})(q^{2i+3}-q^{-2i-3})}
\qquad (0\leq i\leq n),
\label{bR}
\\
\bar{c}_i &=\frac{(q^{2n+2i}-q^{-2n-2i})(q^{2n+2i+2}-q^{-2n-2i-2})}
{(q^{2i-1}-q^{1-2i})(q^{2i+1}-q^{-2i-1})}
\qquad (0\leq i\leq n).
\label{cR}
\end{align}
Moreover, the central elements $\alpha,\beta,\gamma$ act on $R_n(\e,\e')$ as scalar multiplication by 
$$
-(q^2+q^{-2})(q^{2n+1}-q^{-2n-1})(q^{2n+3}-q^{-2n-3}).
$$
\end{lem}
\begin{proof}
The formulas are obtained by applying the algebra homomorphism from Theorem \ref{thm:UAW->Uq(so3)} to the $\Uq$-module $R_n(\e,\e')$. 
\end{proof}


\begin{lem}
\label{lem:parameterRn}
With the notation of Lemma \ref{lem:UAWonRn}, the following statements hold:
\begin{enumerate}
\item The scalars $\{\bar{\theta}_i \}_{i=0}^{n}$ are mutually distinct.

\item If $n\geq 2$ then the scalars $\{\bar{b}_i\}_{i=0}^{n-2}$  are all nonzero.

\item If $n\geq 1$ then the scalars  $\{\bar{c}_i\}_{i=0}^{n}$ are all nonzero.

\item For each $i=0,1,\ldots,n$ the scalar $\bar{a}_i$ is equal to 
$$
\frac{(q^{2n+1}-q^{-2n-1})(q^{2n+3}-q^{-2n-3})}{2(q^2-q^{-2})}
\left(
\frac{q^{2i-1}+q^{1-2i}}{q^{2i-1}-q^{1-2i}}
-
\frac{q^{2i+3}+q^{-2i-3}}{q^{2i+3}-q^{-2i-3}}
\right).
$$

\end{enumerate}
\end{lem}
\begin{proof}
(i): Since $q$ is not a root of unity, it follows from \eqref{thetaR} that for any distinct $i, j \in \{0, 1, \ldots, n\}$, the equality $\bar{\theta}_i=\bar{\theta }_j$ holds if and only if  $i+j=-1$, which is impossible. Thus (i) follows.

(ii), (iii): Under the assumption that $q$ is not a root of unity, the numerators of \eqref{bR} and \eqref{cR} do not vanish for the given indices.

(iv): This follows by applying the identity
$$
\frac{2(q^{a-b}-q^{b-a})}{(q^a-q^{-a})(q^b-q^{-b})}
=
\frac{q^b+q^{-b}}{q^b-q^{-b}}
-\frac{q^a+q^{-a}}{q^a-q^{-a}}
$$
to \eqref{aR} with $a=2i+3$ and $b=2i-1$.
\end{proof}

\begin{lem}
\label{lem:traceRn}
For any $n\in \N$ and any $\e,\e'\in \{\pm 1\}$, the trace of each of $A,B,C$ on $R_n(\e,\e')$ is equal to 
$$
-\frac{q^{4n+4}-q^{-4n-4}}{q^2-q^{-2}}.
$$
\end{lem}
\begin{proof}
In this proof the parameters $\{\bar{\theta}_i\}_{i=0}^n$ and $\{\bar{a}_i\}_{i=0}^n$ are given in \eqref{thetaR} and \eqref{aR}. 
The trace of $B$ on $R_n(\e,\e')$ is equal to $-\sum_{i=0}^n \bar{\theta}_i$. By the summation formula for geometric series, the sum evaluates to $-1$ multiplied by 
$$
\sum_{i=0}^n q^{4i+2}+\sum_{i=0}^n q^{-4i-2}
=\frac{q^{4n+4}-q^{-4n-4}}{q^2-q^{-2}}.
$$
The trace of $A$ and $C$ are both equal to $\sum_{i=0}^n \bar{a}_i$. Applying Lemma \ref{lem:parameterRn}(iv), this sum is equal to $-\frac{(q^{2n+1}-q^{-2n-1})(q^{2n+3}-q^{-2n-3})}{2(q^2-q^{-2})}$ multiplied by a telescoping sum with staggered cancellation:
\begin{align*}
\sum_{i=0}^n
\frac{q^{2i+3}+q^{-2i-3}}{q^{2i+3}-q^{-2i-3}}
-
\frac{q^{2i-1}+q^{1-2i}}{q^{2i-1}-q^{1-2i}}
&=
\frac{q^{2n+3}+q^{-2n-3}}{q^{2n+3}-q^{-2n-3}}
+\frac{q^{2n+1}+q^{-2n-1}}{q^{2n+1}-q^{-2n-1}}
\\
&=
\frac{2(q^{4n+4}-q^{-4n-4})}{(q^{2n+3}-q^{-2n-3})(q^{2n+1}-q^{-2n-1})}.
\end{align*}
Simplifying the resulting expression yields the value stated in the lemma.
\end{proof}

\begin{thm}
For any $n\in \N$ and any $\e,\e'\in \{\pm 1\}$, the $\triangle_{q^4}$-module $R_n(\e,\e')$ is isomorphic to the irreducible $\triangle_{q^4}$-module $V_n(-q^2,-q^2,-q^2)$.
\end{thm}
\begin{proof}
Let $\{u_i\}_{i=0}^n$  be the basis for $R_n(\e,\e')$ given in Lemma \ref{lem:UAWonRn}. We give an ordered basis for $R_n(\e,\e')$ as follows:
\begin{align*}
&u_n, u_{n-2}, \ldots, u_0,u_1,u_3,\ldots, u_{n-1} \qquad \hbox{if $n$ is even},
\\
&u_{n-1}, u_{n-3},\ldots, u_1, u_0, u_2,\ldots, u_n \qquad \hbox{if $n$ is odd}.
\end{align*}
With respect to this ordered basis, the matrix representing $B$ is diagonal, while those representing $A$ and $C$ are tridiagonal. The irreducibility of $R_n(\e,\e')$ then follows from Lemma \ref{lem:parameterRn}(i)--(iii).

Finally, by Lemma \ref{lem:traceRn} the traces of $A,B,C$ on $R_n(\e,\e')$ are equal to 
$$
-(q^2+q^{-2})\frac{q^{4n+4}-q^{-4n-4}}{q^4-q^{-4}}.
$$
According to Lemma \ref{thm:Vn(abc)_criterion} the $\triangle_{q^4}$-module $R_n(\e,\e')$ is isomorphic to $V_n(-q^2,-q^2,-q^2)$. 
\end{proof}

\section{Realizations within anticommutator spin algebras via skew group rings}\label{s:anticommutator}

The Lie algebra $\mathfrak{so}_3$ is the complexification of the standard angular momentum algebra.
The {\it anticommutator spin algebra} $\A$ \cite{ACSA2003} is a fermionic analogue of the standard angular momentum algebra, obtained by replacing the commutation relations \eqref{so3-1}--\eqref{so3-3} with the corresponding anticommutation relations. 
Specifically, the anticommutator spin algebra $\A$ is defined as the algebra over $\C$ generated by $J_1, J_2, J_3$ subject to the relations
\begin{align}
\{J_1,J_2\}=J_3,
\label{ACSA-1}
\\
\{J_2,J_3\}=J_1,
\label{ACSA-2}
\\
\{J_3,J_1\}=J_2.
\label{ACSA-3}
\end{align}

To further elucidate the connection between $\mathfrak{so}_3$ and $\A$, 
we pass to the setting of skew group rings. 
By \eqref{so3-1}--\eqref{so3-3}, there is a unique Lie algebra involution $\rho:\mathfrak{so}_3\to \mathfrak{so}_3$ that maps
\begin{eqnarray}\label{rho}
I_1&\mapsto & I_1,
\qquad
I_2\;\;\mapsto\;\; -I_2,
\qquad 
I_3\;\;\mapsto\;\; -I_3.
\end{eqnarray}
This involution extends to an algebra involution of $U(\mathfrak{so}_3)$, denoted again by $\rho$. 
Let $\U \rtimes \langle \rho \rangle$ denote the corresponding skew group ring of $\langle \rho \rangle$ over $U(\mathfrak{so}_3)$; the structure was previously investigated in an alternative form in \cite{odd:2026}.
Analogously, based on \eqref{ACSA-1}--\eqref{ACSA-3}, there exists a unique algebra involution $\varrho:\mathcal A\to \mathcal A$ that maps
\begin{eqnarray}\label{varrho}
J_1 &\mapsto &  J_1,
\qquad 
J_2 \;\;\mapsto\;\;  -J_2,
\qquad 
J_3 \;\;\mapsto\;\;  -J_3.
\end{eqnarray}
We denote the resulting skew group ring by $\A\rtimes \langle \varrho \rangle$.

\begin{thm}
\label{thm:AZ/2Z->UZ/2Z}
\begin{enumerate}
\item There exists a unique algebra homomorphism $\A\rtimes \langle \varrho \rangle \to \U \rtimes \langle \rho \rangle$ that maps 
\begin{gather*}
J_1 \;\;\mapsto\;\;  
\imi I_1\rho,
\qquad
J_2 \;\;\mapsto\;\;  
\imi I_2,
\qquad 
J_3 \;\;\mapsto\;\;  
I_3\rho,
\\
\varrho \;\;\mapsto\;\;  \rho.
\end{gather*}

\item There exists a unique algebra homomorphism $U(\mathfrak{so}_3) \rtimes \langle \rho \rangle\to \A\rtimes \langle \varrho \rangle$ that maps 
\begin{gather*}
I_1 \;\;\mapsto\;\; -\imi J_1 \varrho,
\qquad
I_2 \;\;\mapsto\;\; -\imi J_2,
\qquad 
I_3 \;\;\mapsto\;\; J_3\varrho,
\\
\rho \;\;\mapsto\;\; \varrho.
\end{gather*}

\item The algebra homomorphisms given in {\rm (i)} and {\rm (ii)} are inverses to each other.
\end{enumerate}
\end{thm}
\begin{proof}
It is routine to verify (i) and (ii) by using \eqref{so3-1}--\eqref{so3-3}, \eqref{ACSA-1}--\eqref{ACSA-3} as well as \eqref{rho} and \eqref{varrho}. Statement (iii) is immediate from (i) and (ii).
\end{proof}

Crucially, Theorem \ref{thm:AZ/2Z->UZ/2Z}(ii) facilitates the construction of an anticommutator analogue of Theorem \ref{thm:R->U(so3)}, where $U(\mathfrak{so}_3)$ is replaced by $\A$.
This realization does not follow directly from previous results such as \cite[Theorem 4.1]{Huang:R<BI}.

\begin{thm}\label{thm:R->A}
There exists a unique algebra homomorphism
$\Re \to \mathcal{A}$
that sends
\begin{eqnarray}
A &\mapsto & 
\frac{J_1^2-1}{4},
\label{R->A-1}
\\
B &\mapsto & 
\frac{J_2^2-1}{4},
\label{R->A-2}
\\
C &\mapsto & 
\frac{J_3^2-1}{4}.
\label{R->A-3}
\end{eqnarray}
Moreover, the homomorphism maps each of $\alpha,\beta,\gamma$ to zero.
\end{thm}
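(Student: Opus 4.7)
The plan is to leverage the injective embedding $\phi\colon \A \hookrightarrow \U_{\Z/2\Z}$ from Theorem~\ref{thm:A->UZ/2Z} together with the homomorphism $\psi\colon \Re \to \U$ from Theorem~\ref{thm:R->U(sl2)}, in order to transfer the Racah relations from $\U$ back to $\A$ via the injectivity of $\phi$.

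First, set
$$
\tilde A = \frac{(J_1-1)(J_1+1)}{4},
\qquad
\tilde B = \frac{(J_2-1)(J_2+1)}{4},
\qquad
\tilde C = \frac{(J_3-1)(J_3+1)}{4}
$$
inside $\A$, and compute their images under $\phi$. The key calculation is the evaluation of $\phi(J_i)^2$ in the skew group ring. Using the commutation rule $\rho\cdot x = \rho(x)\rho$ for $x\in \U$, together with $\rho^2=1$, $\rho(E+F)=E+F$, $\rho(H)=-H$, and $\rho(E-F)=-(E-F)$, one obtains
$$
\phi(J_1)^2 = \frac{(E+F)^2}{4},
\qquad
\phi(J_2)^2 = \frac{H^2}{4},
\qquad
\phi(J_3)^2 = -\frac{(E-F)^2}{4}.
$$
Factoring $\phi(\tilde A), \phi(\tilde B), \phi(\tilde C)$ then reproduces precisely the images of $A,B,C$ under $\iota\circ\psi\colon \Re\to \U_{\Z/2\Z}$, where $\iota\colon \U\hookrightarrow \U_{\Z/2\Z}$ is the natural inclusion. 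In particular the $\rho$-component of each $\phi(\tilde X)$ cancels, so these images land in $\U \subset \U_{\Z/2\Z}$.

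Next I invoke Theorem~\ref{thm:R->U(sl2)}: in $\U$, and hence in $\U_{\Z/2\Z}$, the elements $\iota(\psi(A)), \iota(\psi(B)), \iota(\psi(C))$ satisfy all the defining relations of $\Re$, and the images of $\alpha,\beta,\gamma$ are zero. Setting $\tilde\Delta = \tfrac12[\tilde A,\tilde B]$, each of the identities
$$
[\tilde A,\tilde B]-[\tilde B,\tilde C]=0,
\qquad
[\tilde B,\tilde C]-[\tilde C,\tilde A]=0,
\qquad
[\tilde A,\tilde \Delta]+\tilde A\tilde C-\tilde B\tilde A=0,
$$
together with the two cyclic analogues of the last one, is an equality whose image under $\phi$ is zero in $\U_{\Z/2\Z}$ by Theorem~\ref{thm:R->U(sl2)}. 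By the injectivity of $\phi$, each such expression vanishes already in $\A$; in particular the prospective images of $\alpha,\beta,\gamma$ are not merely central but equal to zero. The universal property of $\Re$ then yields a unique algebra homomorphism $\Re\to \A$ sending $A,B,C$ to $\tilde A,\tilde B,\tilde C$, with $\alpha,\beta,\gamma\mapsto 0$; uniqueness holds because $A,B,C$ generate $\Re$.

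The only non-routine step is the skew-ring computation of $\phi(J_i)^2$, and within it the sign arising from $\rho(E-F)=-(E-F)$, which is what produces the factor of $\imi$ matching Theorem~\ref{thm:R->U(sl2)}. Once that is in hand, no direct manipulation of the anticommutation relations \eqref{ACSA-1}--\eqref{ACSA-3} is needed: the combinatorial content of the Racah relations has already been established in $\U$ via Theorem~\ref{thm:R->U(sl2)}, and injectivity of $\phi$ supplies the rest.
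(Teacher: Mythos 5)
Your proposal is correct and follows essentially the same route as the paper: both transfer the Racah relations from $\U$ into $\A$ through the identification $\A_{\Z/2\Z}\cong\U_{\Z/2\Z}$ and Theorem~\ref{thm:R->U(sl2)}. The only cosmetic difference is direction --- the paper composes $\Re\to\U\hookrightarrow\U_{\Z/2\Z}\to\A_{\Z/2\Z}$ and observes the image lands in $\A$, whereas you push the candidate elements of $\A$ forward under the injective embedding $\A\hookrightarrow\U_{\Z/2\Z}$ and pull the relations back; your explicit computation of $\phi(J_i)^2$ and the matching with $\psi(A),\psi(B),\psi(C)$ is exactly the verification the paper leaves implicit.
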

\begin{proof}
The algebra homomorphism $\Re \to \A$ can be obtained by composing the algebra homomorphism $\Re \to \U$ with the inclusion map $\U \overset{\scriptscriptstyle\subseteq}{\hookrightarrow} \U\rtimes \langle \rho \rangle$, followed by the algebra homomorphism $\U\rtimes \langle \rho \rangle \to \A\rtimes \langle \varrho \rangle$ from Theorem~\ref{thm:AZ/2Z->UZ/2Z}(ii).
\end{proof}

The relationships among the universal Racah algebra $\Re$, the Lie algebra $\mathfrak{so}_3$, and the anticommutator spin algebra $\mathcal A$ are summarized in the commutative diagram below. The arrows  
$\Re\to \U$,
$\A\rtimes \langle \varrho \rangle\to 
\U\rtimes \langle \rho \rangle$,
$\U\rtimes \langle \rho \rangle\to \A\rtimes \langle \varrho \rangle$
and 
$\Re\to \mathcal A$ denote the algebra homomorphisms established in Theorems {\rm \ref{thm:R->U(so3)}}, {\rm \ref{thm:AZ/2Z->UZ/2Z}} and {\rm \ref{thm:R->A}}.

\begin{figure}[H]
\centering
\[
\begin{tikzcd}[row sep=1.85em, column sep=4.2em]
& \U \arrow[r, hookrightarrow, "\subseteq"] 
  & \U\rtimes \langle \rho \rangle
    \arrow[dd, shift left=0.7ex]
\\
\Re \arrow[ru, bend left=15] \arrow[rd, bend right=15] 
  & & 
\\
& \A\arrow[r, hookrightarrow, "\subseteq"] 
  & \A\rtimes \langle \varrho \rangle
    \arrow[uu, shift left=0.7ex]
\end{tikzcd}
\]
\caption{Relations among the universal Racah algebra $\Re$, the Lie algebra $\mathfrak{so}_3$ and the anticommutator spin algebra $\A$}
\end{figure}

We now extend this correspondence to the quantum setting by considering $\A_q$, a $q$-deformation of the anticommutator spin algebra defined as the algebra over $\C$ generated by $J_1, J_2, J_3$ subject to the relations
\begin{align}
\{J_1, J_2\}_q
&=J_3,
\label{Aq-1}
\\
\{J_2, J_3\}_q
&=J_1,
\label{Aq-2}
\\
\{J_3, J_1\}_q
&=J_2.
\label{Aq-3}
\end{align}
Similarly, we extend $\Uq$ and $\A_q$ by adjoining a $\Z/2\Z$-symmetry via skew group rings. 
By \eqref{Uq-1}--\eqref{Uq-3} there exists a unique algebra involution 
$\rho:\Uq\to \Uq$ given by 
\begin{eqnarray*} 
I_1 &\mapsto& I_1, 
\qquad 
I_2 \mapsto -I_2, 
\qquad 
I_3 \mapsto -I_3.
\end{eqnarray*}
The associated skew group ring of $\langle \rho \rangle$ over $\Uq$ will be denoted by $\Uq\rtimes\langle \rho \rangle$. 
By \eqref{Aq-1}--\eqref{Aq-3} there exists a unique algebra involution 
$\varrho:\A_q\to\A_q$ given by
\begin{eqnarray*} 
J_1 &\mapsto& J_1, 
\qquad 
J_2 \mapsto -J_2, 
\qquad 
J_3 \mapsto -J_3.
\end{eqnarray*}
The corresponding skew group ring of $\langle \varrho \rangle$ over $\A_q$ will be denoted by $\A_q\rtimes\langle \varrho \rangle$.  The correspondence established in Theorem \ref{thm:AZ/2Z->UZ/2Z} admits a natural $q$-analogue as follows.

\begin{thm}
\label{thm:AqZ/2Z->UqZ/2Z}
\begin{enumerate}
\item There exists a unique algebra homomorphism $\A_q\rtimes\langle \varrho \rangle
\to \Uq\rtimes\langle \rho \rangle$ that maps 
\begin{gather*}
J_1 \;\;\mapsto\;\;  
\imi I_1\rho,
\qquad
J_2 \;\;\mapsto\;\;  
\imi I_2,
\qquad 
J_3 \;\;\mapsto\;\;  
I_3\rho,
\\
\varrho \;\;\mapsto\;\;  \rho.
\end{gather*}

\item There exists a unique algebra homomorphism $\Uq\rtimes\langle \rho \rangle\to \A_q\rtimes\langle \varrho \rangle$ that maps 
\begin{gather*}
I_1 \;\;\mapsto\;\; -\imi J_1 \varrho,
\qquad
I_2 \;\;\mapsto\;\; -\imi J_2,
\qquad 
I_3 \;\;\mapsto\;\; J_3\varrho,
\\
\rho \;\;\mapsto\;\; \varrho.
\end{gather*}

\item The algebra homomorphisms given in {\rm (i)} and {\rm (ii)} are inverses to each other.
\end{enumerate}
\end{thm}

As a consequence, the isomorphism $\Uq\rtimes\langle \rho \rangle\to \A_q\rtimes\langle \varrho \rangle$ maps the Casimir element $\Lambda$ of $\Uq$ to the negative of the Casimir element $\Pi$ of $\A_q$, where
$$
\Pi = q^2 J_1^2 +q^{-2} J_2^2+q^2 J_3^2 
     - q(q^2 - q^{-2}) J_1 J_2 J_3.
$$

\begin{thm}
\label{thm:UAW->Aq}
There exists a unique algebra homomorphism $\triangle_{q^4}\to \A_q$ that sends 
\begin{eqnarray*}
A &\mapsto & 2+(q^2-q^{-2})^2 J_1^2,
\\
B &\mapsto & 2+(q^2-q^{-2})^2 J_2^2,
\\
C &\mapsto & 2+(q^2-q^{-2})^2 J_3^2.
\end{eqnarray*}
Moreover, the homomorphism maps each of $\alpha,\beta,\gamma$ to 
$$
2(q^2+q^{-2})^2+(q^2-q^{-2})(q^4-q^{-4}) \Pi.
$$
\end{thm}
\begin{proof}
Similar to the proof of Theorem \ref{thm:R->A}.
\end{proof}

\section{Concluding remarks: Higher-rank generalizations}
\label{s:remark}

The algebraic correspondence established in this paper for $\Re $ and $\triangle_q$ provides a robust foundation for exploring the higher-rank generalizations of Askey--Wilson structures. While the rank $1$ case, as treated here, features an inherent cyclic symmetry and a direct connection to the nonstandard quantum algebra $\Uq$, the path toward higher rank $n\ge 2$ typically diverges into two distinct directions.

On one hand, the higher-rank universal Askey--Wilson algebra $\triangle_{q}(n)$ can be systematically formulated via the universal $R$-matrix and the associated braiding symmetries \cite{qBI2018,HR2017,qAW&BI:2019,qAW&Skein:2025}. However, such formulations rely on intricate recursive constructions via nested coproducts and braiding operators, which often entail a prohibitive increase in algebraic complexity. Such frameworks, while theoretically comprehensive, frequently mask the structural elegance and the intrinsic rotational symmetry that remain transparent in the rank 1 setting. On the other hand, the nonstandard quantum algebras $U_{q}^{\prime }(\mathfrak{so}_{n})$ maintain a high degree of structural elegance, yet their precise role in elucidating the bispectrality of multivariable $q$-orthogonal polynomials remains an open problem of considerable depth.

By establishing the rigorous branching rules and the bosonic-fermionic correspondence for rank $1$, we have demonstrated that the ``rigidity'' of the Askey--Wilson patterns persists across both classical and quantum settings. Whether this rigidity can be preserved in higher rank without resorting to the full machinery of the universal $R$-matrix remains an intriguing open question. We hope that the framework developed in this study serves as a cornerstone toward a more unified theory of higher-rank Askey--Wilson symmetry.

\subsection*{Funding statement}
The research was supported by the National Science and Technology Council of Taiwan under the grant NSTC 114-2115-M-008-011.

\subsection*{Conflict-of-interest statement} 
The author has no relevant financial or non-financial interests to disclose.

\subsection*{Data availability statement}
No datasets were generated or analyzed during the current study.

\bibliographystyle{amsplain}
\bibliography{MP}

\end{document}